\newtheorem{proposition}{Proposition}[section]
\newtheorem{lemma}[proposition]{Lemma}
\newtheorem{theorem}[proposition]{Theorem}
\theoremstyle{definition}
\newtheorem{remark}[proposition]{Remark}
\newtheorem{definition}[proposition]{Definition}
\newcommand{\D}{\Delta}
\renewcommand{\O}{\Omega}
\renewcommand{\l}{\lambda}
\renewcommand{\P}{\mathbb{P}}
\renewcommand{\O}{\mathcal{O}}
\DeclareMathOperator{\Aut}{Aut}
\DeclareMathOperator{\Rees}{Rees}
\DeclareMathOperator{\Sym}{Sym}
\DeclareMathOperator{\Tr}{Tr}
\DeclareMathOperator{\diam}{diam}
\newcommand{\R}{\mathbb{R}}
\newcommand{\A}{\mathbb{A}}
\newcommand{\Q}{\mathbb{Q}}
\newcommand{\G}{\mathbb{G}}
\newcommand{\hH}{\bar{H}}
\newcommand{\scL}{\mathcal{L}}
\newcommand{\scM}{\mathcal{M}}
\newcommand{\scX}{\mathcal{X}}
\newcommand{\X}{\mathcal{X}}
\renewcommand{\L}{\mathcal{L}}
\renewcommand{\G}{\mathbb{G}_m}
\DeclareMathOperator{\df}{df}
\DeclareMathOperator{\chow}{chow}
\title{Tits buildings and K-stability}
\author[G. Codogni]{Giulio Codogni}
\address{EPFL, SB MATHGEOM CAG, MA B3 635 (B\^{a}timent MA), Station 8, CH-1015 Lausanne}
\email{giulio.codogni@epfl.ch}
\subjclass[2010]{Primary 14L24; Secondary  32Q20 }
\begin{document}
\maketitle

\begin{abstract}
A polarized variety is K-stable if, for any test configuration, the Donaldson-Futaki invariant is positive. In this paper, inspired by classical geometric invariant theory, we describe the space of test configurations as a limit of a direct system of Tits buildings. We show that the Donaldson-Futaki invariant, conveniently normalized, is a continuous function on this space. We also introduce a pseudo-metric on the space of test configurations. Recall that K-stability can be enhanced by requiring that the Donaldson-Futaki invariant is positive on any admissible filtration of the co-ordinate ring. We show that admissible filtrations give rise to Cauchy sequences of test configurations with respect to the above mentioned pseudo-metric.
\end{abstract}

\begin{section}{Introduction}

The Yau-Tian-Donaldson conjecture predicts that the existence of a canonical metric on a polarized variety $(X,L)$ is equivalent to an appropriate algebraic notion of stability, which should generalize the classical geometric invariant theory stability. 

In classical geometric invariant theory, the Hilbert-Mumford criterion asserts that a point is stable if and only if the Hilbert-Mumford weight is positive on every non-trivial one parameter subgroup.

The suggested generalization of geometric invariant theory is K-stability, which says that a polarized variety $(X,L)$ is K-stable if for every non almost trivial test configuration the Donaldson-Futaki invariant is positive. In this theory, the role of one parameter subgroups is played by test configurations, the Donaldson-Futaki weight is a Hilbert-Mumford weight, and the Hilbert-Mumford criterion is turned into a definition.

Nowadays, it is widely accepted that the notion of K-stability should be enhanced. In \cite{ICM}, a stronger notion is proposed: test configurations are identified with finitely generated admissible filtrations, and $(X,L)$ is called $\hat{K}$-stable if the Donaldson-Futaki invariant is positive on every admissible filtration, not just on the finitely generated ones. The Donaldson-Futaki invariant of a non-finitely generated admissible filtration is defined by approximating the filtration with honest test configurations, and then taking the limit along this approximation. We will recall the relevant definitions in Section \ref{sec:filtrations}.

In classical geometric invariant theory, non-zero one parameter sup-groups are parametrised by the rational points $\D(\Q)$ of a a space $\Delta$, which is usually called the Tits building or flag complex. The Hilbert-Mumford weight, conveniently normalized, becomes a function on $\Delta$. This space can be endowed with various geometric structures, which can be used for different goals; for example, they are used to show the existence and the uniqueness of a maximally destabilizing one parameter subgroup for unstable points, see \cite{Kempf} and \cite{Rousseau}.

As observed by Y. Odaka in \cite{Odaka}, test configurations are parametrised by an appropriate direct system of Tits buildings $\{\Delta_r(\Q)\}_{r\in \mathbb{N}}$, where $\Delta_r$ is the Tits building parametrising one parameter subgroups of $SL(H^0(X,rL))$. We denote by $\Delta_{\infty}(\Q)$ this direct limit, and we investigate two different structures that one can put on this space.

Tits building can be defined as abstract simplicial complex, this point of view gives a topology on $\Delta_r$ which we call the  \emph{simplicial topology}. The Hilbert-Mumford weight is continuous with respect to this topology. In Theorem \ref{thm:cont0}, we will show that the morphisms appearing in the direct system $\{\Delta_r(\Q)\}_{r\in \mathbb{N}}$ are continuous with respect to the simplicial topology on $\Delta_r(\Q)$. We call simplicial topology the direct limit topology induced on $\Delta_{\infty}(\Q)$. The following result, proved in Section \ref{sec:DF-simplicial}, is a corollary of the above mentioned continuity result.

\begin{theorem}
The normalized Donaldson-Futaki weight is continuous with respect to the simplicial topology on the sub-set $\mathcal{T}$ of $\Delta_{\infty}(\mathbb{Q})$ of non almost trivial test configurations.
\end{theorem}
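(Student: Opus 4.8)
The plan is to reduce the statement to each finite level by means of the universal property of the direct limit topology, and then to identify the normalized Donaldson-Futaki weight at finite level with a continuous combination of Hilbert-Mumford weights. Write $\iota_r\colon \Delta_r(\Q)\to\Delta_\infty(\Q)$ for the canonical maps of the direct system and $\psi_{r,r'}\colon\Delta_r(\Q)\to\Delta_{r'}(\Q)$ (for $r\mid r'$) for its transition morphisms, so that $\iota_r=\iota_{r'}\circ\psi_{r,r'}$. By the defining universal property of the direct limit topology, a real-valued function on $\Delta_\infty(\Q)$ is continuous if and only if its pullback along each $\iota_r$ is continuous. Since $\mathcal{T}$ carries the subspace topology, it therefore suffices to prove that the pullback of the normalized Donaldson-Futaki weight $\widehat{\DF}$ is continuous on $\iota_r^{-1}(\mathcal{T})\subseteq\Delta_r(\Q)$ for every $r$, and because $\Delta_r(\Q)$ carries the simplicial topology this can in turn be checked on each closed simplex.

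First I would make explicit how the Donaldson-Futaki weight of the point $\iota_r(x)$ is recovered from weight data. A point $x\in\Delta_r(\Q)$ determines a one parameter subgroup of $SL(H^0(X,rL))$ and, via the Rees construction, a test configuration; its Donaldson-Futaki invariant is the suitably normalized subleading coefficient of the weight polynomial $\wt H^0$ of the central fibre, read off along the tower of levels $r'=r,2r,3r,\dots$. Concretely I would write $\widehat{\DF}(\iota_r(x))$ as the limit, as $r'\to\infty$, of the normalized Hilbert-Mumford weight of $\psi_{r,r'}(x)$. Each term of this sequence is continuous in $x$: the Hilbert-Mumford weight is continuous on $\Delta_{r'}(\Q)$ by the result recalled in the introduction, while $\psi_{r,r'}$ is continuous by Theorem \ref{thm:cont0}, so their composite is continuous.

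It then remains to pass from pointwise convergence to a continuity-preserving mode of convergence, and this uniformity is the main obstacle. I would argue that on each closed simplex of $\iota_r^{-1}(\mathcal{T})$ the normalized weights converge uniformly, by bounding the remainder in the Riemann-Roch type expansion of $\wt H^0$ uniformly in $x$ over the (compact) simplex. Here the restriction to non almost trivial configurations is what keeps the normalization harmless: the normalizing factor is a length of the test configuration that vanishes precisely on the almost trivial locus, so on $\mathcal{T}$ it stays bounded away from zero along each simplex and the quotient defining $\widehat{\DF}$ is well behaved. Granting this uniform control, the uniform limit of continuous functions on each compact simplex is continuous; assembling the simplices gives continuity of $\widehat{\DF}\circ\iota_r$ on $\iota_r^{-1}(\mathcal{T})$, and the reduction of the first paragraph then yields continuity of the normalized Donaldson-Futaki weight on $\mathcal{T}$.
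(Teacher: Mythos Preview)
Your overall architecture matches the paper's: reduce to each $\mathcal{T}_r$ by the universal property of the direct limit, observe that each $\chow_{kr}$ is continuous on $\mathcal{T}_r$ because it is the normalized Hilbert--Mumford weight pulled back along the continuous map $\psi_{r,kr}$ (Theorem~\ref{thm:cont0}), and then pass to the limit in $k$. The divergence is in how that last step is justified, and here your argument is incomplete.

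You propose to prove uniform convergence of $\chow_{kr}\to\df$ on each closed simplex by ``bounding the remainder in the Riemann--Roch type expansion uniformly over the compact simplex''. Two issues. First, the central fibre varies as the weights move inside a simplex, so you cannot simply invoke compactness without an actual boundedness input; you have not supplied one. Second, your claim that the normalising $L^2$ norm ``stays bounded away from zero along each simplex'' is not justified: $\mathcal{T}_r\cap\sigma$ for a closed simplex $\sigma$ need not be compact, so the norm can tend to zero near the boundary, and a uniform-convergence argument on a non-compact set needs more care.

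The paper sidesteps uniform convergence entirely. The key input is Lemma~\ref{lem:uni}: using that all exponent $r$ central fibres are parametrised by a fixed Hilbert scheme, one finds a single $K$ (depending only on $r$) such that for every $k$ divisible by $K$ and every exponent $r$ test configuration, $\chow_{kr}$ is \emph{exactly} a Laurent polynomial in $k$, namely $\chow_{kr}=\df+\ell(k)$ with $\ell(k)$ a fixed-shape principal part. Once this is known, choosing finitely many values $k_1,\dots,k_N$ (all divisible by $K$) and inverting a Vandermonde-type system expresses $\df$ as a finite linear combination of the continuous functions $\chow_{k_1 r},\dots,\chow_{k_N r}$, hence $\df$ is continuous on $\mathcal{T}_r$. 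So the crucial idea you are missing is not analytic uniform convergence but an algebraic uniformity (``same Laurent shape past a uniform threshold''), obtained from boundedness in the Hilbert scheme; with that lemma in hand the limit step becomes elementary linear algebra and the compactness/boundedness issues you raise do not arise.
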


Let us stress that the maps appearing in the direct system $\{\Delta_r(\Q)\}_{r\in \mathbb{N}}$ do not preserve the simplicial structures, hence $\Delta_{\infty}(\Q)$ does not have a natural simplicial structure.

The second structure that we want to discuss is a metric structure. Each Tits building $\Delta_r$ can be endowed with a metric $d_r$; we call this metric the \emph{Tits metric}, and the induced topology the \emph{Tits topology}. The Tits topology is coarser than the simplicial topology. Using the direct system $\{\Delta_r(\Q)\}_{r\in \mathbb{N}}$, we are able to induce in Definition \ref{def:space_conf} a limit pseudo-metric $d_{\infty}$ on $\Delta(\Q)$. This metric is defined as a limsup, and in Proposition \ref{prop:limsup} we show that this limsup is actually a limit. Our next result shows that this metric gives a convenient set-up to study $\hat{K}$-stability.

\begin{theorem}[=Theorem \ref{thm:cauchy}]\label{thm:intro}
Let $F$ be non-finitely generated admissible filtration with non-zero $L^2$ norm; then the sequence of points in $\Delta_{\infty}$ associated to the sequence of test configurations approximating $F$ is a Cauchy sequence for the pseudo-metric $d_{\infty}$.
\end{theorem}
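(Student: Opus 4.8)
The plan is to reduce the Cauchy condition to an $L^2$-approximation statement for the filtration $F$, and then to estimate the Tits distance through the elementary inequality relating the angle between two rays to the norm of their difference. Write $R_m = H^0(X,mL)$, $N_m = \dim R_m$, and let $n = \dim X$. For $r \in \N$ let $F^{(r)}$ denote the finitely generated truncation of $F$ in degrees $\le r$, i.e. the test configuration approximating $F$ at level $r$, and let $x_r \in \D_\infty(\Q)$ be the associated point, which is the image of a cocharacter of $SL(R_r)$. For $m \ge r$ the image of $x_r$ in $\D_m$ is the cocharacter $\lambda^{(r)}_m$ of $SL(R_m)$ whose weights are the $F^{(r)}$-weights on $R_m$. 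By Proposition \ref{prop:limsup} we have $d_\infty(x_r,x_s) = \lim_{m} d_m(\lambda^{(r)}_m,\lambda^{(s)}_m)$, so it suffices to control this limit as $r,s \to \infty$.

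First I would record the form of the Tits distance. Since any two decreasing filtrations of a finite-dimensional vector space admit a common adapted basis (the flag complex being a building, any two simplices lie in a common apartment), the cocharacters $\lambda^{(r)}_m$ and $\lambda^{(s)}_m$ lie in a common apartment of $\D_m$, where $d_m$ is the angular distance; thus $\cos d_m(\lambda,\mu) = \langle \lambda,\mu\rangle/(\|\lambda\|\,\|\mu\|)$, the pairing being computed from the joint weights in that basis. To avoid diagonalising three filtrations at once, I would introduce the auxiliary point $y_m \in \D_m$ attached to the filtration induced by $F$ itself on the single piece $R_m$, and run the triangle inequality
\[
d_m(\lambda^{(r)}_m,\lambda^{(s)}_m) \le d_m(\lambda^{(r)}_m,y_m) + d_m(y_m,\lambda^{(s)}_m).
\]
Taking $\limsup_m$ and using subadditivity, the theorem reduces to proving that $D(r) := \limsup_m d_m(\lambda^{(r)}_m,y_m)$ tends to $0$ as $r\to\infty$.

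To estimate $D(r)$ I would fix, for each $m$, a basis of $R_m$ adapted to both $F$ and $F^{(r)}$ and denote by $w^F_i$ and $w^{(r)}_i$ the corresponding weights; since $F^{(r),t}R_m \subseteq F^t R_m$ one has $w^{(r)}_i \le w^F_i$ for every $i$. Writing $a_i,b_i$ for the centred weights of $\lambda^{(r)}_m,y_m$ and using $\sum_i(b_i-a_i)=0$, the vector $b-a$ is the centring of $c_i := w^F_i - w^{(r)}_i \ge 0$, so that $\|a-b\|^2 = \sum_i (c_i-\bar c)^2$ and $\|a-b\|^2 = \|a\|^2+\|b\|^2-2\|a\|\,\|b\|\cos d_m$, whence $2-2\cos d_m \le \|a-b\|^2/(\|a\|\,\|b\|)$. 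Dividing by $m^{n+2}$ and using the linear boundedness of an admissible filtration, the quantities $\|a\|^2,\|b\|^2,\|a-b\|^2$ converge after this scaling to $\|F^{(r)}\|^2$, $\|F\|^2$ and a number $e(r)\ge 0$ (the common normalising constant being absorbed into the $L^2$ norm of the theorem), giving
\[
2-2\cos D(r) \le \frac{e(r)}{\|F^{(r)}\|\,\|F\|}.
\]
The hypothesis that $F$ has non-zero $L^2$ norm, together with $\|F^{(r)}\|\to\|F\|$, keeps the denominator bounded away from $0$; since the truncations approximate $F$ in $L^2$ we have $e(r)\to 0$, so $\cos D(r)\to 1$ and $D(r)\to 0$. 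Combined with the triangle inequality this yields $d_\infty(x_r,x_s)\le D(r)+D(s)\to 0$, i.e. $(x_r)$ is Cauchy.

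The main obstacle is the final convergence: identifying $\lim_m m^{-(n+2)}\|a-b\|^2$ with a genuine $L^2$-distance $e(r)$ and proving $e(r)\to 0$. This is exactly the $L^2$-approximation of $F$ by its finitely generated truncations, which must be extracted from the definitions of Section \ref{sec:filtrations} (equivalently, from the weak convergence of the associated Duistermaat–Heckman measures together with the uniform integrability supplied by linear boundedness). A secondary point is the clean interchange of the limits in $m$ and $r$; the triangle-inequality reduction is designed precisely to replace the inaccessible three-filtration computation by two pairwise-compatible ones, for which the common-apartment formula applies.
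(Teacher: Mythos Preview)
Your outline is sound and the reduction via the triangle inequality through the auxiliary point $y_m$ is a reasonable strategy, but it takes a longer road than the paper and leaves open precisely the step that the paper's argument dispatches directly.

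The paper does not introduce the intermediate $y_m$ at all. Instead it only compares $p_m$ with $p_{jm}$ (indices related by divisibility, which suffices for the Cauchy property on the directed set), and exploits the inclusion $F^{(m)}_iH^0(X,mkjL)\subseteq F^{(jm)}_iH^0(X,mkjL)$ coming from multiplicativity. In a common adapted basis this inclusion sandwiches the cross term $\Tr(A_k^{(m)}A_k^{(jm)})$ between $\Tr((A_k^{(m)})^2)$ and $\Tr((A_k^{(jm)})^2)$. Since both of the latter, after normalisation by $k^{-n-2}$ and letting $m\to\infty$, converge to $\|F\|_{L^2}^2$ (this is Sz\'ekelyhidi's Lemma~8), the cosine is squeezed to $1$. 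The only external input is the convergence of the $L^2$ norms $\|F^{(m)}\|_{L^2}\to\|F\|_{L^2}$, already available in the literature.

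By contrast, your route requires the stronger statement $e(r)\to 0$, i.e.\ that $F^{(r)}\to F$ in an $L^2$ sense at the level of weight vectors. You correctly flag this as the main obstacle; it is not a triviality, and it is exactly what the paper's sandwich bypasses. One can indeed extract $e(r)\to 0$ from weak convergence of the Duistermaat--Heckman measures together with convergence of second moments, but this is more work than the paper's two-line squeeze. A further minor point: your identification of $\lim_m m^{-(n+2)}\|b\|^2$ with $\|F\|_{L^2}^2$ is the ``single-level'' variance of $F$ on $R_m$, which is not literally the definition used in the paper (the latter goes through the test configurations $F^{(m)}$); the two agree, but this also needs the measure-theoretic input you allude to.

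In short: your approach would work once the $L^2$-approximation is established, but the paper's direct comparison of $p_m$ and $p_{jm}$ via the monotonicity of the truncations is shorter and needs only the already-known norm convergence.
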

The notions of admissible filtrations and $L^2$ norm will be recalled later on.

In Section \ref{sec:analogy}, we explain the relation between classical Tits building and symmetric spaces. We suggest a relation between the Tits building $\Delta_{\infty}$ and the space of K\"{a}hler metrics. Taking this point of view, it is natural to ask about maximal flat subspaces of the space of K\"{a}hler metrics.

The interplay between the simplicial and the Tits topology, as well as the behaviour of the Donaldson-Futaki invariant with respect to the Tits metric, are topics which deserve further investigations. Mimicking the arguments used in geometric invariant theory by \cite{Kempf} and \cite{Rousseau}, a convenient convexity result about the Donaldson-Futaki invariant would imply the existence and unicity of a maximally destabilizing test configurations.

\begin{subsection}*{Relations with other works}

It is possible to define a map from the space $\Delta_{\infty}$ to an appropriate quotient of the space of non-archimedean metrics on the analytification of $(X,L)$ introduced in \cite{BHJ2} and \cite{prep}. This map should be continuous for the simplicial topology. We do not investigate this topic in this note.

K-stability can also be defined in the non-projective setting, see \cite{DervanRoss}, \cite{Der2} and \cite{Zak}. In this set-up, a test configuration is a space $\scX$ endowed with a K\"{a}hler form rather than a line bundle. This configurations do not come naturally from the action of a one parameter sub-group, so Tits building are not available in this setting. It would be interesting to find an alternative way to describe the space $\Delta_{\infty}$.

The automorphism group $\Aut(X,L)$ acts naturally on $\Delta_{\infty}$ preserving the pseudo-metric. When $\Aut(X,L)$ is not reductive, the pair $(X,L)$ is expected to be not K-stable, or at least not $\hat{K}$-stable. In \cite{me}, it is introduced a canonical admissible filtration, called Loewy filtration, which should be destabilising exactly when $\Aut(X,L)$ is not reductive. An interpretation of the Loewy filtration as a Cauchy sequence in $\Delta_{\infty}$ could be a useful step towards the proof of this conjecture.

\end{subsection}

\begin{subsection}*{Notations}
We work over an algebraically close field $\Bbbk$ of characteristic zero. We fix a normal projective variety $X$ of dimension $n$ and a very ample and projectively normal line bundle $L$ over $X$. We use the additive notation for line bundles, so $mL=L^{\otimes L}$.
\end{subsection}
\begin{subsection}*{Acknowledgements} We had the pleasure and the benefit of conversations about the topics of this paper with S. Boucksom, R. Dervan, M. Jonsson, J. Ross, J. Stoppa and F. Viviani. The author was also supported by the FIRB 2012 -Moduli spaces and their applications, and the ERC StG 307119 - Stability in Algebraic and Differential Geometry.
\end{subsection}
\end{section}

\begin{section}{Tits buildings}\label{sec:prel}
In this section, following \cite{Serre} and \cite[Section 2.2]{GIT}, we recall the definition of the \emph{Tits building} $\D$ associated to a finite dimensional complex vector space $V$, and some of its properties. In the literature, Tits buildings are sometime called \emph{spherical buildings} or \emph{flag complexes}.

Let $m$ be the dimension of $V$, and assume that $m\geq 3$. The first definition of $\D$ is as an abstract simplicial complex. Simplexes correspond to parabolic sub-group of $SL(V)$; a simplex corresponding to a parabolic group $P_1$ lies in the boundary of a simplex corresponding to a parabolic group $P_2$ if and only $P_2\subset P_1$. Vertexes are given by maximal parabolic subgroups; maximal simplexes are $m-2$ dimensional. 

Recall that parabolic subgroups correspond to flags of $V$: to a flag we associated its stabiliser. We thus have the following equivalent description of $\D$: each vertex corresponds to a proper vector subspace of $V$; a group of vertexes form a simplex if and only if the associated subspaces form a flag in $V$. 

We can now start enhancing the structure of $\D$.  We identify each simplex with the standard one, in particular we have co-ordinates $x_i$; let $\D(\Q)$ be the set of point with rational co-ordinates. We introduce the following definition
\begin{definition}[Weighted flag]\label{def:weighted_flag}
A weighted flag is the data of a flag 
$$
\{0\} \subset F_1V \subset F_2V \cdots F_{k-1} V \subset F_kV=V
$$
and weights $w=(w_1, \dots , w_k)$ such that $w_i< w_{i+1}$ and $\sum w_i=0$. The weights can be either rational or real numbers. Two flags $(F,w)$ and $(G,w')$ are equivalent if there exists a constant $c$, called the scaling constant, such that $F_iV=G_iV$ and and $w_i=cw_i'$ for every $i$.

The weight of a vector $v$ is the maximum $w_i$ such that $v\in F_i V$.

We say that a basis $\{v_1, \dots, v_n\}$ of $V$ is \emph{adapted} to a flag $F$ if, for every $i$, there exists a subset of $\{v_1, \dots, v_n\}$ which forms a basis of $F_iV$.
\end{definition}

The Tits building $\D$ parametrise weighted flag up to equivalence: the flag corresponds to the simplex, and the weight to the co-ordinates of the point. 

We now associate to each one parameter subgroup $\l$ of $SL(V)$ a weighted flag, hence a point $[\l]$ of $\D(\Q)$. Let $w_1,\dots, w_k$ be the weights of $\lambda$, ordered in an increasing way; let
$$
F_i(V)=\bigoplus_{j\leq i} V_{w_j}
$$
where $V_{w_j}$ is the eigenspace of weight $w_j$ of $\lambda$. Assigning weight $w_i$ to $F_iV$, we obtain the flag associated to $\l$. As shown in \cite[Proposition 2.6]{GIT}, the parabolic sub-group $P(\lambda)$ stabilising this flag consists of all $g$ in $SL(V)$ such that the limit $\lim_{t\to 0} \lambda(t)g\lambda(t)^{-1}$ does exist. This limit, when it exists, centralizes $\lambda$, so it preserves the eigenspaces of $\lambda$; see the proof of \cite[Proposition 2.6]{GIT} for a more precise description of the limit.

Two 1PS's gives the same point in $\D$ if and only if the associated flags are equivalent. In other words, $\D(\Q)$ is equal to set of one parameter subgroups of $SL(V)$ modulo the equivalence relations:
\begin{align*}
\lambda \sim \gamma \quad  \textrm{if}  \quad \lambda=p\gamma p^{-1} \quad p\in P(\lambda) \\
\lambda \sim \gamma \quad   \textrm{if} \quad \lambda^a=\gamma^b \quad a,b \in \mathbb{Z}
\end{align*}

The next piece of structure is given by the apartment. Apartments correspond to maximal tori of $SL(V)$: given a maximal torus $T$, the corresponding apparent $A_T$ is the closure in $\D$ of the one parameter sub-groups of $T$. A flag $F$ is in $A_T$ if and only if the eigenvectors $v_1,\dots , v_n$ of $T$ form a basis adapted to $F$. The key remark is that an apartment is a finite simplicial complex homeomorphic to a sphere, or a simplicial sphere for short. The following standard lemma will be very important
\begin{lemma}(\cite[Lemma II.2.9]{GIT})
Given two points $p$ and $q$ of $\D$, there exists at least an apartment containing both of them.
\end{lemma}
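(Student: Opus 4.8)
The plan is to reduce the statement to a purely linear-algebraic fact about simultaneous splittings of two flags, and then to translate a common adapted basis into a maximal torus whose apartment contains both points.

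First I would recall that a point $p \in \D$ lies in the relative interior of a simplex corresponding to a flag $F$ of $V$, and that likewise $q$ lies in a simplex corresponding to a flag $G$. By the description of apartments recalled just above the statement, an apartment $A_T$ contains the simplex of a flag $H$ if and only if the eigenvectors of the maximal torus $T$ form a basis of $V$ adapted to $H$; moreover, once $A_T$ contains a simplex it contains all of its points, so the weights attached to $p$ and $q$ play no role. Thus it suffices to produce a single basis $\{v_1, \dots, v_m\}$ of $V$ that is simultaneously adapted to $F$ and to $G$: taking $T$ to be the maximal torus that is diagonal in this basis, the apartment $A_T$ contains the simplices of both $F$ and $G$, hence both $p$ and $q$.

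The core of the argument is therefore the claim that any two flags admit a common adapted basis. I would prove this by splitting the bifiltration $H_{ij} := F_i \cap G_j$, with the convention $F_0 = G_0 = \{0\}$. Processing the pairs $(i,j)$ in order of increasing $i+j$, I would choose at each step a complement $V_{ij}$ of $H_{i-1,j} + H_{i,j-1}$ inside $H_{ij}$. The key identity making this work is $H_{i-1,j} \cap H_{i,j-1} = H_{i-1,j-1}$, which is immediate from $F_{i-1} \subseteq F_i$ and $G_{j-1} \subseteq G_j$; combined with an induction on $i+j$, it yields the direct-sum decomposition $H_{ij} = \bigoplus_{i'\le i,\, j'\le j} V_{i'j'}$. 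Setting the second index to its maximal value, where $G_j = V$, gives $F_i = \bigoplus_{i'\le i,\, j'} V_{i'j'}$, and symmetrically in the first index for $G_j$; hence any basis obtained by concatenating bases of the individual subspaces $V_{ij}$ is adapted to both flags, which is exactly what the reduction above requires.

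I expect the only delicate point to be the verification that the sums constructed in the induction are genuinely direct and exhaust $H_{ij}$, that is, the bookkeeping around the identity $H_{i-1,j} \cap H_{i,j-1} = H_{i-1,j-1}$; everything else is formal. It is worth noting that this simultaneous-splitting phenomenon is special to pairs of flags and already fails for three or more of them, since configurations of three flags carry continuous moduli; this is consistent with the fact that the lemma asserts the existence of a common apartment only for two given points.
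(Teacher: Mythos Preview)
The paper does not give its own proof of this lemma; it simply cites \cite[Lemma II.2.9]{GIT}. Your argument is correct and is in fact the standard linear-algebra proof underlying that citation: reduce to finding a basis simultaneously adapted to the two underlying flags, and produce one by splitting the bifiltration $H_{ij}=F_i\cap G_j$ via complements $V_{ij}$ of $H_{i-1,j}+H_{i,j-1}$ inside $H_{ij}$. The identity $H_{i-1,j}\cap H_{i,j-1}=H_{i-1,j-1}$ together with the induction on $i+j$ gives the direct-sum decomposition exactly as you describe, and your observation that the weights attached to $p$ and $q$ are irrelevant once the simplices of $F$ and $G$ lie in $A_T$ is correct.
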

The previous lemma can also be interpreted in the following way: given two points $p$ and $q$ in $\Delta(\Q)$, there exists two commuting 1PS's $\lambda$ and $\gamma$ of $SL(V)$ such that $p=[\lambda]$ and $q=[\gamma]$. 

The building, so far, is an abstract simplicial complex. Looking at its geometric realisation, we can endow it with a topology, which we call the \emph{simplicial topology}.  The simplicial complex $\Delta$ is not locally of finite type, so we need some care in the description of this structure. Apartments are finite simplicial complex homeomorphic to a sphere. On the entire space $\Delta$, the topology is defined as the direct limit of the topology of finite sub-complexes. Since any finite sub-complex is contained in a finite number of apartments, a subset $U$ of $\Delta$ is open if and only if its intersection with any apartment is open. Similarly, a function on $\Delta$ is continuous with respect the simplicial topology if and only if its restriction to each apartment is continuous.

We are now in position to introduce the Tits metric on $\Delta(\Q)$.
\begin{definition}[Tits metric]\label{def:TitsMetric}
Let $p$ and $q$ be two points of $\Delta(\Q)$; pick two commuting 1PS's $\lambda$ and $\gamma$ such that $p=[\lambda]$ and $q=[\gamma]$, and write $\lambda=\exp tA$ and $\gamma=\exp tB$; then we let
$$
d(p,q)=\arccos\left(\frac{\Tr(AB)}{\sqrt{\Tr(A^2)\Tr(B^2)}}\right)
$$
\end{definition}
One can show that this definition is independent of the chosen one parameter subgroups, see for instance \cite[Section 2.2]{GIT}. Moreover, this metric can be extended by continuity to $\Delta$.

Let us describe an interpretation of the Tits metric as angular distance. Take a maximal torus $T$ containing both $\lambda$ and $\gamma$, this gives an apartment $A_T$ containing both $p$ and $q$.
 Let $\Gamma(T)$ be the lattice of one parameter subgroups of $T$. The Killing metric on $\Gamma(T)$ is a quadratic form which is equivariant for the action by conjugation of the normalizer of $T$ in $G$, it is unique up to a scalar. Denote by $E$ the space $\Gamma(T)\otimes \mathbb{R}$ equipped with the Killing metric. Then, $A_T$ can be identified with the unit sphere in $E$, and the Tits metric is nothing but the angular distance. Since the Killing metric unique up to a scalar, the angular distance on $A_T$ is uniquely defined.

Since any two points are contained in an apartment, and the apartment is isometric to a sphere endowed with the angular distance, we have that any two points can be connected by a geodesic and $\diam(\Delta)=\pi$. The geodesic is not unique because, for instance, two points can be contained in many different apartments, and the geodesic constructed above depends on the apartment.

The topology induced by the Tits metric on each apartment is equal to the simplicial topology. However, on $\Delta$, the topology induced by the Tits metric is coarser than the simplicial topology.

\end{section}

\begin{section}{Tits building and test configurations}\label{sec:Tits_and_test}
Let $X$ be a projective variety over an algebraically closed field of characteristic zero, and $L$ a very ample and projectively normal line bundle on $X$. We also fix a generator $t$ of the space of one parameter subgroups of $\G$, and faithful action of $\G$ on $\A^1$; let $0$ be the fixed point of the action and $1$ another point of $\A^1$. We recall the definition of test configuration, which is due to S. Donaldson \cite[Definition 2.1.1]{Don2}.
\begin{definition}[Test configuration]\label{def:tc}
Let $r$ be a positive integer. An exponent $r$ test configuration $(\scX,\scL)$ for $(X,L)$ consist of the following data
\begin{enumerate}
\item a scheme $\scX$ together with a flat map $\pi \colon \scX \to \A^1$;
\item a $\G$ action on $\scX$ such that the morphism $\pi$ is equivariant;
\item a relatively ample line bundle $\scL$ on $\scX$ together with a linearisation of the $\G$ action.
\end{enumerate}
Moreover, we require that the fibre over $1$ is isomorphic to $(X,rL)$.

A test configuration is very (respectively semi-) ample if $\scL$ is very (semi-) ample. A test configuration is trivial if $(\scX,\scL)$ is isomorphic to $(X\times \A^1, rL\boxtimes \mathcal{O}_{\A^1})$, and the $\G$ action is trivial on $X$. A test configuration is normal if $\scX$ is normal. 

Let $\nu\colon \hat{\scX}\to\scX$ be the normalization, then $(\hat{\scX},\nu^*\scL)$ has a natural structure of test configuration, we call it the normalization of $(\scX,\scL)$. A test configuration is almost trivial if its normalization is trivial.

A non-polarized test configuration is the datum of an $\scX$ with a $\G$ action as above, without the choice of a line bundle $\scL$.
\end{definition}

Basic properties of test configurations are described in \cite[Section 2]{BHJ}. There are three main operation one can perform on test configurations.
\begin{definition}\label{def:action}

\begin{description}
\item[Base change] Let $b_p\colon \A^1 \to \A^1$ be the map defined by $z\mapsto z^p$. We can make a base change $(\scX,\scL)$ via $b_p$ obtaining a new test configuration.
\item[Scaling] Consider the trivial action of $\G$ on $X$, and fix a faithful lifting of this action to $\scL$, so that the induced action on $H^0(\scX_0,\scL_0)$ is a homothety. We can scale the action of $\G$ on $\scL$ by adding $c$ times this action, where $c$ is in $\mathbb{Z}$.
\item[Rising the line bundle] We can replace $\scL$ with $m\scL$, for any positive integer $m$.
\end{description}
\end{definition}

We now recall the definition of the $L^2$ norm of a test configuration. For every $k$, the test configuration gives rise to a $\G$ action on $H^0(\scX_0,k\scL_0)$; let $T_k$ be an infinitesimal generator of this action. We denote by $\underline{T}_k$ the traceless part of $T_k$, in symbols
$$\underline{T}_k=T_k-\frac{\Tr(T_k)}{h^0(\scX_0,k\scL_0)}Id$$
Then $\Tr(\underline{T}_k^2)$ is, for $k$ big enough, a degree $n+2$ polynomial in $k$, where $n$ is the dimension of $X$, see for instance \cite[Equation 4]{Gabor} or \cite[Theorem3.1]{BHJ}. We let
$$
||(\scX,\scL)||_{L^2}^2=\lim_{k\to \infty}(kr)^{-n-2}\Tr(\underline{T}_k^2)
$$
Remark that $||(\scX,\scL)||_{L^2}=||(\scX,m\scL)||_{L^2}$ for every $m$.

Let now $V_r=H^0(X,rL)^{\vee}$. Given a 1PS $\lambda$ of $SL(V_r)$, we can construct a test configuration by taking the flat closure of the $\lambda$-orbit of $X$ in $\mathbb{P} V_r$. Any very ample exponent $r$ test configuration arise as orbit of a 1PS of $GL(V_r)$, see \cite[Proposition 3.7]{RossThomas}. By performing a base change and scaling the linearisation, we can always assume that this 1PS lies in $SL(V_r)$. We have now the following key observation of Odaka \cite{Odaka}.
\begin{theorem}
Let $\Delta_r$ be the  Tits building of $V_r$. Then, points of $\Delta_r(\Q)$ are in bijective correspondence with very ample exponent $r$ test configurations, modulo base change and scaling.
\end{theorem}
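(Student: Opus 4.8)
The plan is to establish the stated bijection by constructing explicit maps in both directions and checking they are mutually inverse. The main input is the result of Ross--Thomas cited just before the statement: every very ample exponent $r$ test configuration arises as the flat closure of the orbit of a one parameter subgroup of $GL(V_r)$, and after a base change and a scaling of the linearisation this 1PS may be taken to lie in $SL(V_r)$. So I would begin by fixing, for a given very ample exponent $r$ test configuration $(\scX,\scL)$, such a 1PS $\lambda$ of $SL(V_r)$, and send $(\scX,\scL)$ to the point $[\lambda] \in \Delta_r(\Q)$ defined earlier via the weighted flag of $\lambda$. The map in the other direction is exactly the construction recalled in the excerpt: given a 1PS $\lambda$ of $SL(V_r)$, take the flat closure of the $\lambda$-orbit of $X$ inside $\pr V_r$, equipped with the restriction of the relatively ample $\O(1)$; this is a very ample exponent $r$ test configuration.

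The heart of the argument is to verify that these two constructions descend to well-defined maps on the respective quotients and are inverse to one another; that is, to show that two 1PS's $\lambda$ and $\gamma$ of $SL(V_r)$ produce the same test configuration modulo base change and scaling \emph{if and only if} they define the same point $[\lambda]=[\gamma]$ of $\Delta_r(\Q)$. Recall that $[\lambda]=[\gamma]$ in $\Delta_r(\Q)$ means precisely that $\lambda$ and $\gamma$ are related by the two equivalences listed earlier: conjugation $\lambda = p\gamma p^{-1}$ by an element $p\in P(\lambda)$ of the associated parabolic, and rational rescaling $\lambda^a=\gamma^b$. I would match these two equivalences with the two operations on test configurations. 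First, the rescaling equivalence $\lambda^a=\gamma^b$ corresponds to a base change by $b_a$ together with a reindexing of the 1PS: replacing $\lambda$ by $\lambda^a$ pulls back the test configuration along $z\mapsto z^a$, so the flat closures agree modulo base change. Second, and this is the subtle direction, I would show that conjugation by a parabolic element $p\in P(\lambda)$ does not change the isomorphism class of the flat closure: the content of \cite[Proposition 2.6]{GIT}, recalled in the excerpt, is that $P(\lambda)$ consists of exactly those $g$ for which $\lim_{t\to 0}\lambda(t)g\lambda(t)^{-1}$ exists, and this limit centralizes $\lambda$, hence preserves the eigenspace decomposition. The geometric upshot is that conjugating $\lambda$ by such a $p$ moves the generic fibre by an automorphism of $(X,rL)$ while leaving the central fibre over $0\in\A^1$ unchanged up to the induced action, so the two flat families are $\G$-equivariantly isomorphic as test configurations.

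I expect the main obstacle to lie in this second equivalence, namely proving that the central fibre of the flat closure depends only on the weighted flag and not on the particular 1PS inducing it. The clean way to see this is that the central fibre is the flat degeneration of $X$ under the action of $\lambda$, and by the characterization of $P(\lambda)$ the degeneration is governed by the associated graded of the filtration of $V_r$ by the $F_iV_r$ rather than by the full conjugacy data of $\lambda$; elements of $P(\lambda)$ act trivially on the associated graded and hence fix the degeneration. The scaling operation on test configurations, Definition \ref{def:action}, precisely accounts for the fact that passing from $GL(V_r)$ to $SL(V_r)$ shifts all the weights by a constant, which is why the bijection is stated modulo scaling: adding the homothetic action corresponds to translating the weight vector $(w_1,\dots,w_k)$ by a constant, the same freedom already quotiented out in the normalization $\sum w_i = 0$ defining a weighted flag. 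Assembling these matchings shows the two maps are mutually inverse on the quotient, yielding the bijection.
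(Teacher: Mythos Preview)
Your proposal is correct and follows the same route as the paper, but in considerably more detail. The paper's own proof is a one-liner: it observes that the only nontrivial point is that two 1PS's with equivalent weighted flags yield equivalent test configurations, and defers this to \cite[Theorem 2.3]{Odaka}. You effectively reconstruct that argument, matching the two equivalence relations on 1PS's (power rescaling and parabolic conjugation) with the operations on test configurations (base change, and an honest $\G$-equivariant isomorphism), and you correctly identify the filtration/associated-graded viewpoint as the clean reason why parabolic conjugation leaves the test configuration unchanged.

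One small imprecision worth tightening: the sentence ``conjugating $\lambda$ by such a $p$ moves the generic fibre by an automorphism of $(X,rL)$'' is not literally accurate, since $p\in SL(V_r)$ need not preserve $X$. The actual isomorphism $\scX_\lambda\to\scX_{p\lambda p^{-1}}$ is the family of ambient linear automorphisms $(y,t)\mapsto\bigl(p\,\lambda(t)p^{-1}\lambda(t)^{-1}\,y,\,t\bigr)$; this extends across $t=0$ exactly because $p\in P(\lambda)$ guarantees the limit $\lim_{t\to 0}\lambda(t)p^{-1}\lambda(t)^{-1}$ exists, and it is the identity at $t=1$. Your subsequent filtration argument is the right conceptual explanation and makes this computation unnecessary, so the gap is purely expository rather than mathematical.
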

\begin{proof}
The only thing we have to check is that if the weighted flags associated to two 1PS's are equivalent, then also the corresponding test configuration are equivalent. This is done in \cite[Theorem 2.3]{Odaka}.
\end{proof}

Almost triviality of a test configuration can be characterized in term of the associated filtration of $V_r$, see \cite[Proposition 2.12]{BHJ}. The following lemma, which is contained in the proof of \cite[Proposition 3.7]{RossThomas} is also very important

\begin{lemma}
There exists a $\G$-equivariant trivialization of $\pi_*\scL$; this gives a $\G$-equivariant isomorphism between $H^0(X,rL)^{\vee}$ and $H^0(\scX_0,\scL_0)$, where the $\G$ action on the first vector space is given by the one parameter subgroup inducing the test configuration.
\end{lemma}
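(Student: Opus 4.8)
The plan is to regard $\pi_*\scL$ as a $\G$-equivariant coherent sheaf on $\A^1=\Spec\Bbbk[z]$ and to prove the general fact that such a sheaf, when it is a vector bundle, is equivariantly trivial. Since $\pi$ is $\G$-equivariant, the linearised action on $\scL$ makes the $\Bbbk[z]$-module $M:=H^0(\scX,\scL)$ into a $\Z$-graded module, the grading recording the $\G$-weights and $z$ being homogeneous of weight equal to the fixed generator $t$. Flatness of $\pi$ means that $z$ is a non-zero-divisor on $\scO_{\scX}$, hence also on the line bundle $\scL$; therefore $M$ is a finitely generated torsion-free module over the principal ideal domain $\Bbbk[z]$, and so it is free.

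First I would upgrade freeness to graded freeness. By the graded analogue of the structure theorem for finitely generated modules over a graded principal ideal domain, a torsion-free finitely generated $\Z$-graded $\Bbbk[z]$-module is a direct sum of shifts $\bigoplus_i \Bbbk[z](a_i)$; equivalently $M$ admits a homogeneous basis $e_1,\dots,e_N$ with $\deg e_i=a_i$. Such a homogeneous basis is exactly a $\G$-equivariant trivialisation $\pi_*\scL\cong\bigoplus_{i=1}^N\scO_{\A^1}$, the $i$-th summand carrying the $\G$-weight $a_i$. This is the asserted trivialisation, and all of the structure of the lemma follows from reading off fibres.

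Indeed, since $M$ is free of rank $N=\dim H^0(X,rL)$, its formation commutes with restriction to the fibres over $1$ and over $0$. Reducing the homogeneous basis modulo $(z-1)$ gives a basis of $H^0(\scX_1,\scL_1)=H^0(X,rL)$, while reducing modulo $(z)$ gives a basis of the $\G$-representation $H^0(\scX_0,\scL_0)$, on which $e_i$ has weight $a_i$; thus the trivialisation identifies the two vector spaces $\G$-equivariantly. Finally, because $\scL=\scO_{\P V_r}(1)|_{\scX}$ and the $\G$-action on $\scX$ is induced by the one parameter subgroup $\lambda\subset SL(V_r)$, the weights $a_i$ are precisely the weights of $\lambda$ on $V_r=H^0(X,rL)^{\vee}$. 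Transporting through the duality $V_r=H^0(X,rL)^{\vee}$ then yields the isomorphism as stated, with the $\G$-action on the first factor given by $\lambda$.

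The step I expect to be the main obstacle is the base change at the central fibre: one must know that the natural map $(\pi_*\scL)\otimes\Bbbk(0)\to H^0(\scX_0,\scL_0)$ is an isomorphism, even though $\scX_0$ may be singular or non-reduced. This is exactly where projective normality of $L$, which makes $H^0(\P V_r,\scO(1))\to H^0(X,rL)$ surjective and keeps $h^0$ constant in the family, together with the relative very ampleness of $\scL$, enter to control $R^1\pi_*\scL$; I would import this vanishing from the analysis in \cite[Proposition 3.7]{RossThomas}. Once base change is granted, the remainder is the purely formal assertion that $\G$-equivariant vector bundles on $\A^1$ are equivariantly trivial.
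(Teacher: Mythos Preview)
The paper does not supply its own proof of this lemma: it simply records that the statement ``is contained in the proof of \cite[Proposition 3.7]{RossThomas}'' and moves on. So there is nothing in the paper to compare against line by line.

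Your argument is the standard one and is essentially what one finds in Ross--Thomas: interpret $\pi_*\scL$ as a finitely generated $\Z$-graded $\Bbbk[z]$-module, observe that flatness over the smooth base $\A^1$ forces it to be torsion-free, hence graded-free, and read off the equivariant trivialisation from a homogeneous basis. The identification of the fibre at $1$ with $H^0(X,rL)$ and of the fibre at $0$ with $H^0(\scX_0,\scL_0)$ is then cohomology and base change, which you correctly flag as the one nontrivial input and, like the paper, defer to \cite[Proposition 3.7]{RossThomas}. In the specific situation at hand (the test configuration sits inside $\P(V_r)\times\A^1$ with $\scL=\scO(1)$, and $X\hookrightarrow\P(V_r)$ is embedded by the complete linear series) one can also argue directly that the restriction map $V_r^{\vee}\otimes\scO_{\A^1}\to\pi_*\scL$ is an isomorphism, which sidesteps the abstract base change discussion; but your route is fine.

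One small comment: in the last paragraph you write that the weights $a_i$ are the weights of $\lambda$ on $V_r=H^0(X,rL)^{\vee}$. Strictly speaking the $a_i$ are the weights on sections, i.e.\ on $H^0(X,rL)$, so there is a sign flip when passing to $V_r$. This is harmless for the purposes of the lemma (and the dual in the paper's statement is itself a bit loose), but it is worth being precise if you later compare traces or norms.
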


From the point of view of K-stability, it is quite natural to identify the test configuration $(\scX,\scL)$ and $(\scX,m\scL)$. Because of this, we look at the direct systems formed by the buildings $\Delta_r$ and the morphisms
\begin{displaymath}
\begin{array}{ccccc}
\iota_{r,k}\colon & \Delta_r(\Q) &\to & \Delta_{rk}(\Q) \\
&(\scX,\scL) & \mapsto & (\scX,k\scL)
\end{array}
\end{displaymath}

\begin{theorem}[= Theorem \ref{thm:cont}]\label{thm:cont0}
For every $i$ and $k$, the map $\iota_{r,k}$ is continuous for the simplicial topology.
\end{theorem}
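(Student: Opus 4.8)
The plan is to reduce the continuity of $\iota_{r,k}$ to a statement about a single pair of apartments, where it becomes a question of elementary linear algebra. Recall that $\Delta_r(\Q)$ carries the direct limit topology of its finite subcomplexes, and that every finite subcomplex is contained in finitely many apartments; consequently the topology is the coherent topology determined by the apartments, and a map out of $\Delta_r(\Q)$ is continuous for the simplicial topology as soon as its restriction to each apartment $A_T$ is continuous. Recall also that on a single apartment the simplicial topology coincides with the topology induced by the Tits metric, that is, with the standard topology of the sphere $A_T \subset E_T$, where $E_T = \Gamma(T)\otimes\R$ is the weight space. Thus it suffices to fix a maximal torus $T$ of $SL(V_r)$ and show that $\iota_{r,k}|_{A_T}$ is continuous into $\Delta_{rk}(\Q)$.

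Next I would make $\iota_{r,k}$ explicit in coordinates. The operation $\scL \mapsto k\scL$ is, by the trivialization lemma together with the projective normality of $L$, governed by the surjective multiplication map $\Sym^k H^0(X,rL) \to H^0(X,rkL)$; dualizing (we are in characteristic zero) yields a $\G$-equivariant inclusion $V_{rk} = H^0(X,rkL)^\vee \hookrightarrow \Sym^k V_r$ onto a torus-invariant subspace. Choosing an eigenbasis $e_1,\dots,e_m$ of $V_r$ for $T$, the torus acts diagonally on $\Sym^k V_r$ with the monomial weights $w_{i_1}+\cdots+w_{i_k}$, and it preserves $V_{rk}$. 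Passing to the traceless part of the induced one parameter subgroup of $GL(V_{rk})$ produces a one parameter subgroup of $SL(V_{rk})$; fixing a maximal torus $T'$ of $SL(V_{rk})$ containing its image, the whole apartment is sent into a single apartment, $\iota_{r,k}(A_T)\subseteq A_{T'}$. Since $A_{T'}$ is a finite subcomplex, its subspace topology in $\Delta_{rk}(\Q)$ agrees with its intrinsic simplicial (hence sphere) topology, so it is enough to check that the induced map $A_T \to A_{T'}$ is continuous.

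The induced map on weight spaces $\Phi\colon E_T \to E_{T'}$ is the composition of the linear weight-sum assignment $(w_1,\dots,w_m)\mapsto(W_1,\dots,W_M)$, where each $W_j$ is a sum of $k$ of the $w_i$, with the linear projection onto the traceless hyperplane. Both maps are $\R$-linear, so $\Phi$ is $\R$-linear and in particular continuous, and it therefore descends to a continuous map of the associated spheres $A_T\to A_{T'}$, provided it never sends a nonzero vector to zero. This last point, which simultaneously shows that $\iota_{r,k}$ is well defined and nondegenerate, follows by exhibiting extremal monomials: if the $w_i$ are not all equal then, taking sections $s\in H^0(X,rL)$ of minimal and of maximal weight, the powers $s^k\in H^0(X,rkL)$ are nonzero and carry the distinct weights $kw_{\min}$ and $kw_{\max}$, so $\Phi(w)$ has non-constant coordinates and its traceless projection is nonzero.

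The point that requires care is that $\iota_{r,k}$ does not respect simplicial structures: as $w$ moves inside $A_T$ and meets the walls where some of the $w_i$, or some of the monomial sums $w_{i_1}+\cdots+w_{i_k}$, collide, both the flag in $V_r$ and the flag in $V_{rk}$ jump, so one cannot argue cell by cell. The resolution is precisely the reduction above: once the source and target apartments are identified with spheres in $E_T$ and $E_{T'}$, the map is the single linear map $\Phi$, for which continuity is automatic, and the equality of the metric and simplicial topologies on an apartment transports this conclusion back to the simplicial topology.
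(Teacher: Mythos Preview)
Your argument is correct and rests on the same core observation as the paper's: restrict to an apartment, pass to the symmetric power via Segre, and note that the resulting map on weight spaces is linear, hence continuous on spheres. The paper, however, organizes the proof as a factorization $\iota = \rho \circ S$ through the intermediate building $\Delta_S$ of $\Sym^k H^0(X,rL)$, proving separately that the Segre map $S\colon \Delta_r \to \Delta_S$ and the retraction $\rho\colon \Delta_S \to \Delta_{rk}$ associated to the inclusion $V_{rk}\hookrightarrow \Sym^k V_r$ are continuous. Your route is slightly more direct: by observing from the outset that the full torus $T\subset SL(V_r)$ preserves the subspace $V_{rk}$, you conclude that $\iota_{r,k}(A_T)$ lands inside a single apartment $A_{T'}$ of $\Delta_{rk}$, and then a single linear map $\Phi\colon E_T\to E_{T'}$ handles everything. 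This bypasses a genuine nuisance in the paper's two-step argument, since the retraction $\rho$ by itself does \emph{not} send apartments into apartments (the paper flags this explicitly), forcing a more delicate check of continuity for $\rho$ via preimages of target apartments. What the paper's factorization buys is modularity: the Segre map and the retraction are defined and shown continuous in full generality, independent of the particular embedding $V_{rk}\hookrightarrow \Sym^k V_r$, so the pieces could in principle be reused. Your nondegeneracy check via $s^k$ for a section $s$ of extremal weight is also cleaner than anything made explicit in the paper.
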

We postpone it to Section \ref{sec:map}; let us point out that in the proof we also describe explicitly the morphisms $\iota_{r,k}$, and these morphisms do not preserve the simplicial structure of $\Delta_r$. In other words, if one sees $\Delta_r$ as a direct limit of simplicial spheres, the maps $\iota_{r,k}$ are well defined on the resulting topological space $\Delta_r$, but do not preserve the direct system structure of $\Delta_r$, and it does not make sense to ask if the two limit commute. We are now going to define the central object of study of this paper.

\begin{definition}[Space of test configurations]\label{def:space_conf} The space of test configurations is the space $\Delta_{\infty}(\Q)$ defined as the direct limit
$$\Delta_{\infty}(\Q):=\lim_r\Delta_r(\Q)\,.$$
The simplicial topology on $\Delta_{\infty}(\Q)$ is the direct limit of the simplicial topology on $\Delta_r(\Q)$.

The pseudo-metric on $d_{\infty}$ is the pseudo-metric given by
$$
d_{\infty}(p,q)=\limsup_r d_r(p,q)
$$
where $d_r$ is the Tits metric on $\Delta_r(\Q)$. The Tits topology on $\Delta_{\infty}(\Q)$ is the topology induced by $d_{\infty}$.
\end{definition}

Remark that $p$ and $q$ can be seen as points of $\Delta_r$ for every $r$ divisible enough, so the previous expression for $d_{\infty}$ makes sense. Moreover, since $\diam(\Delta_r)=\pi$ for every $r$, $d_{\infty}$ is finite and $\diam(\Delta_{\infty})\leq \pi$. 

The space $\Delta_{\infty}(\Q)$ parametrizes all test configurations, modulo the three operations defined introduced in \ref{def:action}, namely modulo scaling, base change and rising the line bundle.

\begin{proposition}\label{prop:limsup}
The limsup appearing in Definition \ref{def:space_conf} is actually a limit; in other words,
$$
d_{\infty}(p,q)=\lim_r d_r(p,q)
$$
\end{proposition}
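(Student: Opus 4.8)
The plan is to compute $d_r(p,q)$ explicitly for all $r$ in a cofinal family of admissible exponents and show the resulting sequence converges; once convergence is established, its $\limsup$ is automatically its limit. First I would use the operation of raising the line bundle to arrange that $p$ and $q$ are both realised at one common exponent $r_0$, so that it suffices to let $r=r_0k$ run over the multiples of $r_0$ and send $k\to\infty$. By \cite[Lemma II.2.9]{GIT} I can then choose commuting one parameter subgroups $\lambda,\gamma$ of $SL(V_{r_0})$ lying in a common maximal torus $T$, with $p=[\lambda]$ and $q=[\gamma]$. The torus $T$ acts on the coordinate ring $\bigoplus_m H^0(X,mL)$, giving on each graded piece $H^0(X,r_0kL)$ a joint $(\lambda,\gamma)$-weight decomposition into spaces $W_k^{(i,j)}$; I write $h_k^{(i,j)}=\dim W_k^{(i,j)}$ and $N_k=h^0(X,r_0kL)=\sum_{i,j}h_k^{(i,j)}$. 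Under the equivariant trivialization recalled above, the traceless level-$k$ generators of $\lambda$ and $\gamma$ are operators $\underline{T}_k,\underline{S}_k$ that are diagonal on this decomposition, with entries the (recentred) weights $i-\bar\imath_k$ and $j-\bar\jmath_k$, where $\bar\imath_k=N_k^{-1}\sum i\,h_k^{(i,j)}$, and likewise for $\bar\jmath_k$.

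Since $\underline{T}_k$ and $\underline{S}_k$ commute, the images $\iota_{r_0,k}(p),\iota_{r_0,k}(q)$ are represented by commuting one parameter subgroups, so Definition \ref{def:TitsMetric} gives
$$
d_{r_0k}\bigl(\iota_{r_0,k}(p),\iota_{r_0,k}(q)\bigr)=\arccos\!\left(\frac{\Tr(\underline{T}_k\underline{S}_k)}{\sqrt{\Tr(\underline{T}_k^2)\,\Tr(\underline{S}_k^2)}}\right).
$$
Writing $f(k)=\Tr(\underline{T}_k^2)$, $g(k)=\Tr(\underline{S}_k^2)$ and $h(k)=\Tr(\underline{T}_k\underline{S}_k)$, the whole statement reduces to showing that the cosine on the right converges. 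Note that $\Tr$ is a positive definite inner product on traceless operators, so Cauchy--Schwarz gives $h(k)^2\le f(k)g(k)$ and the cosine always lies in $[-1,1]$.

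For the leading asymptotics I would exploit commutativity through a polarization trick. Because $\lambda$ and $\gamma$ commute, the product $\lambda\gamma$ is again a one parameter subgroup of $SL(V_{r_0})$, with traceless level-$k$ generator $\underline{T}_k+\underline{S}_k$, and it defines an exponent $r_0$ test configuration. Applying the polynomiality statement \cite[Theorem 3.1]{BHJ} (see also \cite{Gabor}) to the three configurations $\lambda$, $\gamma$ and $\lambda\gamma$ shows that $(r_0k)^{-n-2}f(k)$, $(r_0k)^{-n-2}g(k)$ and $(r_0k)^{-n-2}\Tr\bigl((\underline{T}_k+\underline{S}_k)^2\bigr)$ converge to $\|\lambda\|_{L^2}^2$, $\|\gamma\|_{L^2}^2$ and $\|\lambda\gamma\|_{L^2}^2$ respectively. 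The polarization identity $2h=\Tr\bigl((\underline{T}_k+\underline{S}_k)^2\bigr)-f-g$ then yields
$$
(r_0k)^{-n-2}h(k)\longrightarrow \tfrac{1}{2}\bigl(\|\lambda\gamma\|_{L^2}^2-\|\lambda\|_{L^2}^2-\|\gamma\|_{L^2}^2\bigr).
$$
When $\|\lambda\|_{L^2}$ and $\|\gamma\|_{L^2}$ are both nonzero the cosine converges to the ratio of these three limits, and continuity of $\arccos$ gives convergence of $d_{r_0k}$.

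The remaining case, where one $L^2$ norm vanishes and the $(r_0k)^{n+2}$ scaling is no longer sharp, is where I expect the only genuine difficulty. There I would invoke the finer fact, furnished by equivariant Riemann--Roch for the torus $T$, that each weighted moment $\sum_{i,j}i^aj^b h_k^{(i,j)}$ is eventually a polynomial in $k$; consequently $f,g,h$ are, for $k\gg0$, fixed rational functions of $k$. Writing $f(k)\sim c_fk^{\alpha}$, $g(k)\sim c_gk^{\beta}$, $h(k)\sim c_hk^{\gamma}$ with $c_f,c_g>0$ (as $p,q$ are nontrivial), the bound $h^2\le fg$ forces $2\gamma\le\alpha+\beta$, so $h/\sqrt{fg}\sim (c_h/\sqrt{c_fc_g})\,k^{\gamma-(\alpha+\beta)/2}$ converges (to $0$ if the exponent is negative, to $c_h/\sqrt{c_fc_g}$ if it is zero). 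In every case $d_{r_0k}(\iota p,\iota q)$ converges, and since the admissible exponents are cofinal with the multiples of $r_0$, the $\limsup$ defining $d_\infty(p,q)$ equals this limit. The essential point, and the only place beyond the cited single-configuration results, is the convergence of the mixed trace $h(k)$: the commutativity guaranteed by the apartment lemma is precisely what lets the polarization identity reduce it to three ordinary $L^2$ norms.
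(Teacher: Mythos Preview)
Your polarization strategy --- reduce $\Tr(A_kB_k)$ to three $L^2$-norms via $\lambda$, $\gamma$ and $\lambda\gamma$ --- is exactly the idea behind the paper's proof. The gap is the sentence ``the torus $T$ acts on the coordinate ring $\bigoplus_m H^0(X,mL)$'': a maximal torus $T\subset SL(V_{r_0})$ acts on $\Sym^k H^0(X,r_0L)$, but it does not preserve $X\subset\P(V_{r_0})$, so the action does not descend through the multiplication map $\Sym^k H^0(X,r_0L)\twoheadrightarrow H^0(X,r_0kL)$ (the kernel, the degree-$k$ piece of the ideal of $X$, is not $T$-stable). Consequently there is no joint weight decomposition $W_k^{(i,j)}$, the operators $\underline{T}_k,\underline{S}_k$ as you define them are not available, and the assertion that the level-$k$ generator for the test configuration of $\lambda\gamma$ equals $\underline{T}_k+\underline{S}_k$ is unjustified: the three test configurations associated to $\lambda$, $\gamma$, $\lambda\gamma$ have three different central fibres, and their level-$k$ operators live on three different vector spaces with no canonical common identification. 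The appeal to ``equivariant Riemann--Roch for the torus $T$'' in your degenerate-case analysis rests on the same non-existent action.

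The paper supplies a geometric substitute for your torus action: it builds a single non-polarised test configuration $\X$ dominating both $\X_1$ and $\X_2$ equivariantly, pulls back $\L_1,\L_2$ to line bundles $\scM_1,\scM_2$ on $\X$, and applies \cite[Theorem~3.1]{BHJ} to the polarisation $\scM_1+\scM_2$. On the \emph{common} central fibre $\X_0$ the infinitesimal generator for $\scM_1+\scM_2$ is the sum of those for $\scM_1$ and $\scM_2$, which is precisely what makes the polarization identity rigorous. Your Cauchy--Schwarz treatment of the case where an $L^2$ norm vanishes is a genuine addition --- the paper simply asserts that numerator and denominator are polynomials of degree $n+2$ and does not separately discuss what happens when that degree drops --- but to make it stand you would need to recast the polynomiality of the mixed moments in terms of this dominating $\X$ rather than the torus $T$.
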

\begin{proof}
Let $(\X_1,\L_1)$ and $(\X_2,\L_2)$ be very ample test configurations associated to $p$ and $q$. By raising $\L_1$ and $\L_2$ to suitable powers, we can assume that they have the same exponent. When $r$ is divisible by the exponent, we have the Tits metric
$$
d_r(p,q)=\arccos\left(\frac{\Tr(A_rB_r)}{\sqrt{\Tr(A_r^2)\Tr(B_r^2)}}\right)\,,
$$
where $A_r$ and $B_r$ are generators of two commuting one parameter subgroups of $SL(H^0(X,rL)^{\vee})$ inducing respectively $(\X_1,r\L_1)$ and $(\X_2,r\L_2)$. The denominator of $d_r(p,q)$ is well known to be, for $r$ divisible enough, a polynomial of degree $n+2$, see for instance \cite[Equation 4]{Gabor} or \cite[Theorem 3.1]{BHJ}. We are going to show that also the numerator is a polynomial of degree $n+2$.

Choose a non-polarised test configuration $\X$ dominating equivariantly both $\X_1$ and $\X_2$; this can be constructed by resolving simultaneously the indeterminacy of the maps $X\times \mathbb{P}^1\dashrightarrow \X_i$, cf \cite[Section 6.6]{BHJ}. Let $\alpha$ be the $\mathbb{G}_m$ action on $\X$. Denote by $\scM_i$ be the pull-back of $\L_i$ to $\X$. The restriction of $\scM_1+ \scM_2$ on $\X_t$, for $t\neq 0$, is isomorphic to $2rL$, hence $H^0(\X_0,\scM_1+ \scM_2|_{\X_0})^{\vee}$ can be identified in a $\G$ equivariant way with  $H^0(X,2rL)^{\vee}$ and the infinitesimal generator of the action of $\alpha$ is exactly $A_r+B_r$. By applying \cite[Theorem 3.1]{BHJ} we show that $\Tr(A_r+B_r)^2$ is a polynomial of degree $n+2$. Since also $\Tr A_r^2$ and $\Tr B_r^2$ are polynomial of degree $n+2$, we conclude that the same is true for $\Tr A_kB_k$.

\end{proof}

It is also natural to consider the space
$$
\Delta_{\infty}:= \lim_r\Delta_r
$$
endowed with its simplicial topology. We have a natural inclusion $\Delta_{\infty}(\Q)\subset \Delta_{\infty}$, and we can extend $d_{\infty}$ to a metric on $\Delta_{\infty}$. We do not know about relation between $\Delta_{\infty}$ and the completion of $\Delta_{\infty}(\Q)$ with respect to $d_{\infty}$.

\end{section}

\begin{section}{Donaldson-Futaki invariant and the symlicial topology}\label{sec:DF-simplicial}
We first briefly recall some facts about the Hilbert-Mumford weight, following \cite[Chapter 2]{GIT}. Let the group $SL(V)$ act on a projective variety $Z$, and linearise the action to a line bundle $H$. Pick a closed point $z$ in $Z$. For any 1PS $\lambda$ of $SL(V)$ we can consider the Hilbert-Mumford weight $\mu(\lambda)$ with respect to $z$ and $H$.

Fix now an $SL(V)$ invariant norm $|| \, - \,||$ on the 1PS's of $SL(V)$. The ratio $\nu(\lambda)=\mu(\lambda)/||\lambda||$ is a well defined function on the Tits building $\Delta(V)$; moreover, $\nu$ is continuous for the simplicial topology.

Following \cite[Section 3]{RossThomas} and \cite{Gabor}, we introduce the Chow weights and the Donaldson-Futaki weight. Fix an exponent $r$, and let $V_r=H^0(X,rL)$. We choose as $SL(V_r)$-invariant norm on the space of 1PS's of $SL(V_r)$ the norm $||\exp(tA)||=r^{-n-2}\Tr_{V_r}A^2$. Remark that $\lambda$ is already taken in the special linear group, so $A$ is traceless. In particular, $||\exp(tA)||$ is equal to $r^{-n-2}\Tr(\underline{T}_1^2)$, where $\underline{T}_1$ is the operator introduced in Section \ref{sec:Tits_and_test}.

The group $SL(V_r)$ acts on the appropriate Hilbert scheme $Z_r$, and the variety $X$ gives a point $[X]$ in $Z_r$. Choosing the correct line bundle on $Z_r$, the associated normalized Hilbert-Mumford weight is the normalised Chow weight:
$$
\chow_r\colon \Delta_r \to \R
$$
This line bundle is the pull-back of the Chow line bundle from the Chow scheme, via the cycle-class map from the Hilbert scheme to the Chow scheme. The normalization of he Chow line bundle is such that the $r$-th normalized Chow weight of an exponent $r$ test configuration is 
$$
\chow_r(\X,\L)=||\lambda||^{-1}\frac{ra_0}{b_0}
$$
where $h(k)=a_0k^n+O(k^{n-1})$ is the Hilbert polynomial of $(X,rL)$, and $w(k)=b_0k^{n+1}+O(k^n)$ is the trace of the operator $T_k$ introduced in Section \ref{sec:Tits_and_test}. Remark that $w(r)=0$, because we started off with a $\lambda$ in the special linear group, however $w(k)$ is a non-trivial polynomial of degree $n+1$ for $k$ big enough.

Pulling-back via the maps $\iota_{r,k}$, we have the higher Chow weights
$$
\chow_{rk}\colon \Delta_r \to \R \, ,
$$
so that
$$
\chow_{kr}(\X,\L)=||\lambda||^{-1}\left(\frac{kra_0}{b_0}-\frac{w(k)}{h(kr)}\right)
$$

Let $\mathcal{T}\subset \Delta_{\infty}(\Q)$ be the subset of test-configurations with non-zero $L^2$ norm, and $\mathcal{T}_r$ its intersection with $\Delta_r$. Fix a point in $\mathcal{T}_r$, the value of the Chow weight at that point is, for $k$ big enough, equal to a Laurent polynomial
$$
\chow_{kr}=\df+\ell(k)
$$
where $\df$ is the constant term and $\ell(k)$ is the principal part of the Laurent polynomial, in particular $\ell(k)$ converges to zero when $k$ goes to infinity. This gives an invariant
$$
\df\colon \mathcal{T} \to \R
$$
defined as $\df(p)=\lim_k \chow_{kr}(p)$, where $r$ is such that $p$ lies in $\Delta_r$.

The invariant $\df$ is by definition the Donaldson-Futaki invariant of a test configuration divided by its $L^2$ norm.

\begin{lemma}\label{lem:uni}
Fix $r$, then there exists a positive integer $K$ such that $\chow_{kr}$ is a Laurent polynomial for all exponent $r$ test configuration and all $k$ divisible by $K$.
\end{lemma}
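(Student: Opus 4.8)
The plan is to separate the two ingredients of $\chow_{kr}$, namely the Hilbert function $h(kr)=\dim H^0(X,krL)$ in the denominator and the weight $w(k)=\Tr(T_k)$ in the numerator, and to show that the threshold beyond which each of them agrees with a polynomial in $k$ can be chosen \emph{independently of the test configuration}. Once both are polynomial, the displayed formula for $\chow_{kr}$ exhibits it as the announced Laurent polynomial, so the entire content of the lemma is this uniformity. That the two are eventually polynomial for a \emph{fixed} configuration is already known (Serre vanishing, and \cite[Theorem 3.1]{BHJ} for $w$); what must be added is a uniform bound.

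For the denominator the uniform bound is immediate. Since $L$ is ample there is an integer $m_0$, depending only on $(X,L)$, with $H^i(X,mL)=0$ for all $i>0$ and $m\geq m_0$. Hence $h(kr)=\chi(X,krL)$ agrees with the Hilbert polynomial of $(X,L)$ for every $k$ with $kr\geq m_0$, a threshold independent of the configuration.

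The substance is the numerator. Recall that an exponent $r$ test configuration is the flat closure in $\mathbb{P}(V_r)$ of the orbit of a $1$PS $\lambda$ of $SL(V_r)$, with central fibre $\scX_0=\lim_{t\to 0}\lambda(t)\cdot X$, a $\G$-invariant closed subscheme, and that $w(k)$ is the total $\lambda$-weight of the induced action on $H^0(\scX_0,k\scL_0)$. The key point is that, as $\lambda$ ranges over all $1$PS, the central fibres $\scX_0$ all share the Hilbert polynomial $P$ of $X$ (by flatness), hence define $\k$-points of the single Hilbert scheme $\Hilb^{P}(\mathbb{P}(V_r))$. This scheme is projective, in particular of finite type over $\k$, so by the uniform regularity built into its construction there is an integer $m_1$, depending only on $P$ — equivalently only on $(X,L,r)$ — such that every such $\scX_0$ is $m_1$-regular.

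Finally I would exploit $m_1$-regularity \emph{equivariantly}. For $k\geq m_1$ one has $H^i(\scX_0,k\scL_0)=0$ for $i>0$, so $\dim H^0(\scX_0,k\scL_0)=P(k)=h(kr)$ and, more importantly, the $\G$-equivariant Euler characteristic $\chi^{\G}(\scX_0,k\scL_0)$ equals the character of $H^0(\scX_0,k\scL_0)$. By the polynomiality of the equivariant Hilbert (Snapper) polynomial this character, and therefore its first moment $w(k)=\Tr(T_k)$, is given by a polynomial in $k$ for $k\geq m_1$ — exactly the conclusion of \cite[Theorem 3.1]{BHJ}, now with a threshold depending only on $m_1$. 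Taking $K$ to be a common multiple of $m_1$ and $\lceil m_0/r\rceil$ then forces $k\geq m_1$ and $kr\geq m_0$ whenever $K\mid k$, so both $w(k)$ and $h(kr)$ are polynomial for every such $k$ and every exponent $r$ test configuration, which is the assertion. The main obstacle is precisely the passage from uniform control of \emph{dimensions} to uniform control of the \emph{weight}: this is where equivariant Serre vanishing over the bounded family $\Hilb^{P}(\mathbb{P}(V_r))$, rather than a configuration-by-configuration argument, is essential.
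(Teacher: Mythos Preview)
Your argument is correct and matches the paper's own proof: both observe that all central fibres of exponent $r$ test configurations lie in a single Hilbert scheme, then invoke uniform vanishing of higher cohomology over this bounded family to conclude, via \cite[Theorem 3.1]{BHJ}, that $w(k)$ and $h(kr)$ are polynomial beyond a threshold depending only on $(X,L,r)$. Your version is somewhat more explicit---separating numerator and denominator and phrasing the uniform bound through Castelnuovo--Mumford regularity rather than citing \cite[Theorem 1.2.13]{Laz}---but the substance is identical.
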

\begin{proof}
Fixed a test configuration $(\X,\L)$, the Chow invariant $\chow_{kr}$ is a polynomial as soon as $H^i(\X_0,k\L_0)$ vanishes for all $i>0$, see \cite[Theorem 3.1 and Corollary 3.2]{BHJ}. Fixed the exponent, central fibres are parametrized by a Hilbert scheme, so the result follows from a general statement of the form: if $T$ is a noetherian scheme, and $Y\to T$ is a projective morphism with a relatively ample line bundle $L$, then there exists a $K$ such that $H^i(Y_t,kL_t)=0$ for all $t$ in $T$, all $i>0$, and all $k$ divisible by $K$; this is well-known, see for instance\cite[Theorem 1.2.13 and its proof]{Laz}.
\end{proof}

We have now the following proposition.
\begin{proposition}
The normalised Donaldson-Futaki invariant $\df$ is continuous with respect to the simplicial topology on $\mathcal{T}\subset \Delta_{\infty}(\Q)$.
\end{proposition}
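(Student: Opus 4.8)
The plan is to peel off the two limits hidden in the statement — the direct limit defining $\Delta_\infty(\Q)$ and the limit in $k$ defining $\df$ — and reduce everything to an explicit computation on a single apartment, where the only real work is to check that the coefficients of the relevant weight polynomials vary continuously.

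First I would use the description of the simplicial topology twice. Since $\Delta_\infty(\Q)=\lim_r\Delta_r(\Q)$ carries the direct limit topology, $\df$ is continuous on $\mathcal{T}$ if and only if its restriction $\df|_{\mathcal{T}_r}$ is continuous on $\mathcal{T}_r\subset\Delta_r(\Q)$ for every $r$; and since a function is continuous for the simplicial topology on $\Delta_r$ precisely when its restriction to each apartment is continuous (Section \ref{sec:prel}), it suffices to prove that $\df$ is continuous on $\mathcal{T}_r\cap A_T$ for every apartment $A_T$, the latter being an open subset of $A_T$ because $\|\cdot\|_{L^2}$ is continuous and non-zero there. Fix therefore $r$ and a maximal torus $T\subset SL(V_r)$, and identify $A_T$ with the unit sphere in $\Gamma(T)\otimes\R$; a point of $A_T$ is then a weight vector $a=(a_1,\dots,a_m)$ with $\sum a_i=0$ acting on $V_r$ through the eigenbasis of $T$. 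At this fixed level the two basic continuity inputs are available: for each fixed $k$ the total weight $w_a(k)=\Tr(T_k)$ and the second moment $\Tr(\underline{T}_k^2)$ are continuous, indeed piecewise polynomial, functions of $a$ on $A_T$, since the eigenvalues of $T_k$ are linear in $a$, the dimensions of the graded pieces of the induced filtration are locally constant, and along the walls where two weights $\g_i(a),\g_j(a)$ coincide the sums $\sum_j \g_j(a)^s\dim\gr_j$ ($s=1,2$) match up; this is the mechanism behind the continuity of the Hilbert–Mumford weight recalled in Section \ref{sec:DF-simplicial}. The Hilbert function $h(k)=h^0(X,krL)$ is independent of $a$.

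The heart of the matter, and the main obstacle, is to upgrade this fixed-$k$ continuity to continuity of the limit $\df(a)=\lim_k\chow_{kr}(a)$, that is, to interchange the limit in $k$ with the continuity in $a$. Here I would invoke Lemma \ref{lem:uni}: there is a single $K$, depending only on $r$, such that for all $k$ divisible by $K$ and all exponent $r$ test configurations the higher cohomology vanishes, so that $k\mapsto w_a(k)$ and $k\mapsto\Tr(\underline{T}_k^2)$ are genuine polynomials of bounded degree, with $K$ uniform in $a$. Evaluating at the fixed nodes $k=K,2K,\dots$ and inverting the corresponding Vandermonde matrix — which does not depend on $a$ — expresses every coefficient $b_i(a)$ of $w_a(k)$, and the leading coefficient of $\Tr(\underline{T}_k^2)$, namely $\|\cdot\|_{L^2}^2$, as a fixed linear combination of the continuous functions $a\mapsto w_a(jK)$ and $a\mapsto\Tr(\underline{T}_{jK}^2)$. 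Consequently all these coefficients are continuous functions of $a$ on $A_T$.

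Finally I would conclude by writing $\df$ as an explicit algebraic expression in these coefficients. By construction $\df(a)$ is the constant term of the large-$k$ expansion of $\chow_{kr}(a)$, hence a fixed rational combination of the constant coefficients $a_0,a_1$ of $h$, the continuous coefficients $b_0(a),b_1(a)$ of $w_a$, and the normalising norm $\|\cdot\|_{L^2}$. On $\mathcal{T}_r$ the norm is non-zero and, by the previous step, continuous, while the denominators coming from $a_0=r^nL^n/n!$ are non-zero; therefore $\df$ is continuous on $\mathcal{T}_r\cap A_T$, which gives the claim after the two reductions. I expect the genuinely delicate ingredient to be exactly the uniformity in Lemma \ref{lem:uni}: it is the fixed arithmetic progression of interpolation nodes, valid simultaneously at every point of the apartment, that legitimates extracting the polynomial coefficients continuously and thereby commuting $\lim_k$ with the restriction-to-apartment continuity; without it the pointwise limit of the continuous functions $\chow_{kr}$ need not be continuous.
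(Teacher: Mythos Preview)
Your argument is correct and follows the same overall strategy as the paper: reduce via the direct-limit topology to $\mathcal{T}_r$, use Lemma~\ref{lem:uni} to make $k\mapsto\chow_{kr}$ a Laurent polynomial uniformly in the point of $\mathcal{T}_r$, and then extract the constant term continuously (your Vandermonde step is just the explicit form of the paper's ``so all its coefficients have to be continuous'').

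The one organisational difference worth flagging is how you establish that, for each fixed $k$, the function $\chow_{kr}$ is continuous on $\mathcal{T}_r$. The paper gets this in one line as the composition of the classically continuous normalised Hilbert--Mumford weight $\chow_{kr}$ on $\Delta_{kr}$ with the continuous map $\iota_{r,k}\colon\Delta_r\to\Delta_{kr}$ of Theorem~\ref{thm:cont0}. You instead bypass $\iota_{r,k}$ entirely and argue directly on an apartment $A_T\subset\Delta_r$, tracking how the eigenvalues of $T_k$ vary with the weight vector $a$. That direct argument is valid, but note that the heuristic ``the eigenvalues of $T_k$ are linear in $a$'' is only true chamber-by-chamber: what actually happens is that the induced filtration on $H^0(X,krL)$, hence the central fibre and the graded pieces, is constant on the open chambers of $A_T$, with the sums $\sum_j\gamma_j(a)^s\dim\gr_j$ matching across walls. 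This piecewise-linear analysis is precisely the content of the Segre-plus-retraction decomposition in Section~\ref{sec:map}, so you are in effect reproving the restriction of Theorem~\ref{thm:cont0} to a single apartment. Your route buys independence from that theorem at the price of reproducing part of its proof; the paper's route is shorter because the work has been packaged elsewhere.
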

\begin{proof}
Since the topology on $\Delta_{\infty}(\Q)$ is the direct limit topology, it is enough to show that $\df$ is continuous when restricted to $\mathcal{T}_r$, for every $r$. We know that $\chow_{kr}$ is continuous on $\mathcal{T}_{kr}$ for every $k$ and $r$; since, by Theorem \ref{thm:cont0}, the maps $\iota_{k,r}$ are continuous, $\chow_{kr}$ is continuous on $\mathcal{T}_r$. By Lemma \ref{lem:uni}, for $k$ divisible enough $\chow_{kr}$ is a Laurent polynomial in $k$, so all its coefficients have to be continuous as functions on  $\mathcal{T}_r$. This proves the claim.
\end{proof}
\begin{remark} At least for smooth varieties over the complex numbers, because of Donaldson work \cite{Don}, we know that $\df$ is bounded below on $\mathcal{T}$. The lower bound can be described in term of the curvature of K\"{a}hler metrics in the class $c_1(L)$.
\end{remark}

When $X$ is a normal variety, a test configuration has zero $L^2$ norm if and only if it is almost trivial, \cite[Theorem 1.3]{Ruadhai_Uniform} and \cite[Corollary B]{BHJ}. Let us now give the definition of K-stability
\begin{definition}[K-stability] A normal polarised variety $(X,L)$ is K-semistable if $\df(\scX,\scL)\geq 0$ for every test configuration $(\scX,\scL)$. It is K-stable if it is K-semistable and $\df(\scX,\scL)=0$ if and only if $(\scX,\scL)$ is almost trivial.
\end{definition}
\end{section}
\begin{section}{Filtrations and the completion with respect to the Tits metric}\label{sec:filtrations}
In this section we study filtrations of the co-ordinate ring $R$ of $(X,L)$. Recall that the K-stability of $(X,L)$ is equivalent to the K-stability of $(X,kL)$ for every $k$, so we can assume without loss of generality that $L$ is projectively normal.
\begin{definition}[Admissible filtration]\label{filtration}
A filtration $F$ of $R$ is the datum of an increasing flag on each graded piece $H^0(X,kL)=V_k$, indexed by $\mathbb{Z}$. We say that the filtration is
\begin{description}
\item[Multiplicative] if $$F_iV_a\otimes F_jV_b\to F_{i+j}V_{a+b}\,,$$ for every $a$, $b$, $j$ and $k$;
\item[Point wise right bounded] If for every fixed $k$ we have that $$F_iV_k=V_k$$ for $i>>0$; this is also said \emph{exhaustive};
\item[Linearly left bounded] If there exists a negative constant $C$ such that $$F_{Ck}V_k=\{0\}$$ for every $k$.
\end{description}
A filtration is \emph{admissible} if it satisfies the three properties listed above. We let $F_iR=\oplus_k F_iH^0(X,kL)$.
\end{definition}
There are two operations that we can perform on filtrations. We can scale them, which means replacing $F_i$ with $F_{ci}$ for some fixed constant $c$, and we can shift them, which means replacing $F_iH^0(X,kL)$ with $F_{i+ck}F_iH^0(X,kL)$, for a fixed constant $c$.

Given a multiplicative filtration $F$, we can construct its Rees algebra
$$
\Rees(F)=\bigoplus_i F_i R t^i
$$
we say that a filtration is \emph{finitely generated} if its Rees algebra is finitely generated.

As explained in \cite{witt} and \cite{Gabor}, taking the Proj of the Rees algebra of an admissible finitely generated filtration one obtains a test configuration. More generally, there is a correspondence between finitely generated admissible filtrations of the rings $R$ and test configurations, see \cite[Proposition 2.15]{BHJ}. Under this correspondence, scale the filtration corresponds to a base change, shift the filtration corresponds to scale the linearisation, see Definition \ref{def:action}.

Following \cite{Gabor}, we can approximate a non-finitely generated admissible filtration with finitely generated ones. Let $F$ be an admissible filtration; denote by $\chi^{(m)}$ the $\Bbbk[t]$-sub-algebra of $\Rees(F)$ generated by the finite dimensional vector space $\oplus_i F_i H^0(X,rmL)  t^i\oplus R_it^N$, for $N$ big enough. We now let 
$$F^{(m)}_iH^0(X,mkL)=\{s\in H^0(X,mkL) \textrm{  s.t.  } s t^i \in \chi^{(m)}\}$$ 
this defines a finitely generated admissible filtration of $R$. Let $(\scX^{(m)},\scL^{(m)})$ be the corresponding test configuration. Then one defines
$$
||F||_{L^2}=\liminf_{m\to \infty} ||(\scX^{(m)},\scL^{(m)})||_{L^2}
$$
In \cite{Gabor} it is shown that this liminf is actually a limit.

\medskip

Given a flag $F$, for every $m$ we can construct a weighted flag of $H^0(X,mL)$ up to scaling, in the sense of Definition \ref{def:weighted_flag}. This is done by first giving weight $i$ to the piece $F_i$, and then subtracting a common rational constant to all weights to normalise the trace. The filtration obtained in this way has rational weights; we denote by $p_m$ the corresponding point in $\Delta_m(\Q)$. As explained in \cite[Section 3.2]{Gabor}, the test configuration associated to this weighted filtration is equivalent to the Proj of the Rees algebra of $F^{(m)}$. 

If the filtration is finitely generated, then $F^{(m)}=F$ for $m$ big enough, and the sequence $\{p_m\}$ is eventually constant as a sequence in $\Delta_{\infty}(\Q)$. On the other hand, when the filtration is not finitely generated, the test configurations associated to the points $p_m$ are different, so a non finitely generated filtration defines a non-constant sequence in $\Delta_{\infty}$.

\begin{theorem}\label{thm:cauchy}
Let $p_m$ be the sequence of points in $\Delta_{\infty}(\Q)$ associated to an admissible filtration $F$ such that $||F||_{L^2}\neq 0$; then, this is a Cauchy sequence for the pseudo-metric $d_{\infty}$.
\end{theorem}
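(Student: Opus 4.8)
The plan is to reinterpret the angular distance $d_\infty$ through a limiting inner product on infinitesimal generators, and then to recognise the Cauchy property as the statement that the approximants $p_m$ converge in the associated $L^2$-seminorm. For two points $p,q\in\Delta_\infty(\Q)$ represented at level $r$ by commuting generators $A_r,B_r$, Proposition \ref{prop:limsup} shows that $\Tr(A_rB_r)$ is, for $r$ divisible enough, a polynomial of degree $n+2$ in $r$; hence
$$\langle p,q\rangle := \lim_{r} r^{-(n+2)}\Tr(A_rB_r)$$
exists, with $\langle p,p\rangle = \|p\|_{L^2}^2$ up to the fixed normalisation constant. Since $\arccos$ is continuous and $\cos d_r(p,q)=\Tr(A_rB_r)/\sqrt{\Tr(A_r^2)\Tr(B_r^2)}$, dividing numerator and denominator by $r^{n+2}$ gives
$$\cos d_\infty(p,q) = \frac{\langle p,q\rangle}{\|p\|_{L^2}\,\|q\|_{L^2}}$$
whenever $\|p\|_{L^2},\|q\|_{L^2}\neq 0$. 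Being a limit of the trace forms, $\langle\cdot,\cdot\rangle$ is symmetric and positive semi-definite, so it is a genuine semi-inner product for which $d_\infty$ is the attached angular distance. Crucially, the cross term $\langle p,q\rangle$ is computed exactly as in the proof of Proposition \ref{prop:limsup}: dominate the two configurations by a single $\scX$ and read off the leading coefficient of $\Tr(A_r+B_r)^2$ via \cite[Theorem 3.1]{BHJ}, which avoids having to diagonalise $A_r$ and $B_r$ simultaneously.

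Next I would specialise to the approximants $p_m$ attached to $(\scX^{(m)},\scL^{(m)})$. The result of \cite{Gabor} that the $\liminf$ defining $\|F\|_{L^2}$ is a limit reads $\|p_m\|_{L^2}\to\|F\|_{L^2}$. I would upgrade this to the claim that $\{p_m\}$ is Cauchy in the $L^2$-seminorm, namely that
$$\|p_m-p_{m'}\|_{L^2}^2 = \|p_m\|_{L^2}^2 - 2\langle p_m,p_{m'}\rangle + \|p_{m'}\|_{L^2}^2 \longrightarrow 0$$
as $m,m'\to\infty$. I would argue this in the Hilbert-space completion of the semi-inner-product space: the weight measures of $F^{(m)}$ converge to the Duistermaat--Heckman measure of $F$ (this is the content of the approximation in \cite{Gabor} and \cite{BHJ}), giving weak convergence of the classes $p_m$ and, tested against fixed finitely generated configurations, convergence of the pairings $\langle p_m,q\rangle$. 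Combined with the norm convergence $\|p_m\|_{L^2}\to\|F\|_{L^2}$, weak convergence together with convergence of norms forces strong convergence in the completed space; hence $\{p_m\}$ is $L^2$-Cauchy and in particular $\langle p_m,p_{m'}\rangle\to\|F\|_{L^2}^2$.

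It remains to translate $L^2$-convergence into the angular statement. Since $\|F\|_{L^2}\neq 0$, for $m$ large we have $\|p_m\|_{L^2}\neq 0$, the formula for $\cos d_\infty$ applies, and
$$\cos d_\infty(p_m,p_{m'}) = \frac{\langle p_m,p_{m'}\rangle}{\|p_m\|_{L^2}\,\|p_{m'}\|_{L^2}} \longrightarrow \frac{\|F\|_{L^2}^2}{\|F\|_{L^2}^2} = 1,$$
so $d_\infty(p_m,p_{m'})\to\arccos 1 = 0$, which is the Cauchy property. This is exactly where the hypothesis $\|F\|_{L^2}\neq 0$ enters: it keeps the denominator away from zero and lets convergence of the unnormalised generators pass to convergence of their angles. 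I expect the main obstacle to be the cross term $\langle p_m,p_{m'}\rangle$: because $F^{(m)}$ and $F^{(m')}$ need not be simultaneously diagonalisable, the pairing is not a naive integral against a joint weight distribution, and controlling it — either through the dominating-configuration computation or through the weak-convergence argument, verifying that the joint second moments converge to the common value $\|F\|_{L^2}^2$ — is the technical heart of the proof.
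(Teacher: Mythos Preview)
Your reformulation of $d_\infty$ through the limiting pairing $\langle p,q\rangle$ and the reduction of the Cauchy property to $\langle p_m,p_{m'}\rangle\to\|F\|_{L^2}^2$ is exactly right, and matches the paper's setup. The gap is in how you propose to control the cross term.

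You argue that convergence of the Duistermaat--Heckman measures of $F^{(m)}$ to that of $F$ yields weak convergence of the classes $p_m$, hence convergence of $\langle p_m,q\rangle$ for fixed $q$. But the Duistermaat--Heckman measure of $p_m$ only records the limiting eigenvalue distribution of $A_r^{(m)}$; the pairing $\langle p_m,q\rangle=\lim_r r^{-(n+2)}\Tr(A_r^{(m)}B_r)$ depends on how the eigenspaces of $A_r^{(m)}$ sit relative to those of $B_r$, not merely on the marginal spectra. Convergence of DH measures therefore does not, by itself, give convergence of these cross terms; your ``weak convergence plus norm convergence implies strong convergence'' step is not justified. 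You correctly flag this as the technical heart, but the mechanism you propose does not address it.

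The paper's argument bypasses this entirely by exploiting a structural fact you do not use: for $m'=jm$ a multiple of $m$, multiplicativity of $F$ forces the inclusion $\chi^{(m)}\cap R^{(jm)}[t]\subseteq\chi^{(jm)}$, which at each level translates into a containment between the filtrations $F^{(m)}$ and $F^{(jm)}$ on $H^0(X,kL)$. In particular the two filtrations are \emph{nested}, so they admit a common adapted basis; in such a basis the cross term $\Tr(A_k^{(m)}A_k^{(jm)})$ is sandwiched between $\Tr((A_k^{(m)})^2)$ and $\Tr((A_k^{(jm)})^2)$. Passing to the limit in $k$ and then in $m$, both ends of the sandwich tend to $\|F\|_{L^2}^2$, and the cross term is squeezed to the same value. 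No weak-convergence or dominating-configuration argument is needed: the nesting of the approximants is precisely the missing idea.
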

\begin{proof}
We need to show that, for every $j$, the distance $d_{\infty}(p_m,p_{jm})$ converges to zero when $m$ goes to infinity. More explicitly, we have to show that
$$
\lim_m \limsup_k \frac{\Tr(A_k^{(m)}A_k^{(jm)})}{\sqrt{\Tr((A_k^{(m)})^2)\Tr((A_k^{(jm)})^2)}}=1
$$
where, for each $m$, the limit is taken on all $k$ divisible by both $m$ and $jm$; the $A_k^{(m)}$ and $A_k^{(jm)}$ are infinitesimal generator of commuting 1PS representing $p_m$ and $p_{jm}$ in $\Delta_k$. Because of the hypothesis on the norm, the limit of the denominator normalised by $k^{-n-2}$ is not zero, so we can compute the limit of the numerator and the denominator separately.

To start with, let us recall that $\limsup_k \sqrt{k^{-n-2}\Tr((A_k^{(m)})^2)}$ converges to the $L^2$ norm of the test configuration associated to $p_m$, and the limit $\lim_m ||p_m||_{L^2}$ is equal to the $L^2$ norm $||F||_{L^2}$ of the filtration $F$, as explained in \cite{Gabor}, see in particular Lemma 8. The same is true for $p_{jm}$.

We now have to deal with the numerator. Fix $m$ and $k$. The multiplicativity of $F$ implies, for every $j$, the following inclusion relation
$$
\chi^{(m)}\cap R^{(jm)}[t]\subseteq \chi^{(jm)}\,,
$$
where $R^{(jm)}$ is the Veronese ring $\oplus_{\ell}H^0(X,jm\ell L)$. This inclusion in turn implies that, for every $i$ and $k$, we have
$$
F_i^{(jm)}H^0(X,mkjL)\subseteq F_i^{(m)}H^0(X,mkjL)\,.
$$
Choosing a basis of $H^0(X,mkjL)$ adopted to both $F^{(m)}$ and $F^{(jm)}$, we can translate the above inclusions in the following inequalities.
$$
\Tr((A_k^{(m)})^2)\leq \Tr(A_k^{(m)}A_k^{(jm)})\leq \Tr((A_k^{(jm)})^2)
$$

Taking the limit on $k$ and then $m$, arguing as before we conclude that
$$\lim_m \limsup_k k^{-n-2}\Tr(A_k^{(m)}A_k^{(jm)})=||F||_{L^2}^2$$
\end{proof}

\end{section}

\begin{section}{Description of the morphisms between Tits buildings}\label{sec:map}
In this section we describes explicitly the maps
\begin{displaymath}
\begin{array}{ccccc}
\iota_{r,k}\colon & \Delta_r(\Q) &\to & \Delta_{rk}(\Q) \\
&(\scX,\scL) & \mapsto & (\scX,\scL^{\otimes k})
\end{array}
\end{displaymath}
Our main result is the following.

\begin{theorem}\label{thm:cont}
For every $i$ and $k$, the map $\iota_{r,k}$ is continuous for the simplicial topology.
\end{theorem}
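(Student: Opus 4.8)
The plan is to use the direct-limit description of the simplicial topology given in Section \ref{sec:prel}: a map out of $\Delta_r(\Q)$ is continuous for the simplicial topology if and only if its restriction to every apartment $A_T$ is continuous, and on each apartment the simplicial and Tits topologies coincide. So I would fix a maximal torus $T\subset SL(V_r)$, equivalently a basis $e_1,\dots,e_m$ of $H^0(X,rL)$ diagonalising $T$, and analyse $\iota_{r,k}$ on $A_T$. A point of $A_T$ is a weight vector $\mathbf{w}=(w_1,\dots,w_m)$ with $\sum_i w_i=0$, taken up to positive scaling, i.e. a point of the unit sphere in the trace-zero subspace of $\R^m$.

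First I would make the map explicit. The torus $T$ acts on $H^0(X,rkL)$ through the surjection $\Sym^k H^0(X,rL)\to H^0(X,rkL)$ coming from projective normality, so this space splits into $T$-weight spaces indexed by multidegrees $\alpha\in\Z_{\geq 0}^m$ with $|\alpha|=k$. I choose once and for all a basis $f_1,\dots,f_N$ of $H^0(X,rkL)$ adapted to this decomposition, and let $\alpha^{(j)}$ be the multidegree of $f_j$. For the one parameter subgroup with weight vector $\mathbf{w}$, the weight of $f_j$ is the linear expression $\langle\alpha^{(j)},\mathbf{w}\rangle$. Passing to the special linear group amounts to subtracting the average weight, matching the trace-zero normalisation of Definition \ref{def:weighted_flag} and realised by the scaling operation of Definition \ref{def:action}. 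Since the basis $f_1,\dots,f_N$ is fixed, the image one parameter subgroup is diagonal in it for every $\mathbf{w}$; hence it lies in the maximal torus $T''\subset SL(V_{rk})$ diagonal in this basis, so $\iota_{r,k}(A_T)\subseteq A_{T''}$ is contained in a single apartment. In weight coordinates the map is therefore
$$
\mathbf{w}\;\longmapsto\;\bar\Phi(\mathbf{w}):=\Bigl(\langle\alpha^{(j)},\mathbf{w}\rangle-\tfrac1N\textstyle\sum_{l}\langle\alpha^{(l)},\mathbf{w}\rangle\Bigr)_{j=1,\dots,N},
$$
followed by normalisation to the unit sphere of $A_{T''}$; that is, $\iota_{r,k}|_{A_T}$ is the sphere map $\mathbf{w}\mapsto \bar\Phi(\mathbf{w})/\lVert\bar\Phi(\mathbf{w})\rVert$ for a \emph{fixed} linear map $\bar\Phi$.

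The hard part is to check that $\bar\Phi(\mathbf{w})\neq 0$ for every $\mathbf{w}$ representing a point of the building, since then the sphere map is a ratio of continuous functions with non-vanishing denominator, hence continuous. Now $\bar\Phi(\mathbf{w})=0$ means that all the weights $\langle\alpha^{(j)},\mathbf{w}\rangle$ coincide, i.e. that $\mathbf{w}$ is orthogonal to every difference $\alpha^{(j)}-\alpha^{(l)}$. Here I would use that $X$ is integral, so its section ring is a domain: the sections $e_i^{\,k}$ and $e_i^{\,k-1}e_j$ are non-zero and have distinct multidegrees $k\mathbf{1}_i$ and $(k-1)\mathbf{1}_i+\mathbf{1}_j$, so these multidegrees occur among the $\alpha^{(j)}$. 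Their differences $\mathbf{1}_j-\mathbf{1}_i$ span the hyperplane $\{\sum_i x_i=0\}$, so orthogonality to all of them forces $\mathbf{w}$ to be a constant vector, which is $0$ after the trace-zero normalisation. As building points have $\mathbf{w}\neq 0$, we conclude $\bar\Phi(\mathbf{w})\neq 0$; indeed $\bar\Phi$ is injective on the trace-zero subspace.

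Putting this together, $\iota_{r,k}|_{A_T}$ is a continuous map into $A_{T''}$, hence into $\Delta_{rk}$, and by the direct-limit description of the simplicial topology $\iota_{r,k}$ is continuous. I expect the non-vanishing of $\bar\Phi$ to be the only genuine obstacle: it is precisely the point where the geometry of $X$ (integrality, equivalently non-degeneracy of $X\subset\P V_r$) enters, rather than pure representation theory of $SL$. It is also worth noting that the sphere map $\bar\Phi$ followed by normalisation generally carries a simplex of $A_T$ across several simplices of $A_{T''}$, namely wherever two weights $\langle\alpha^{(j)},\mathbf{w}\rangle$ collide; this is exactly why $\iota_{r,k}$ fails to preserve the simplicial structure while remaining continuous, as anticipated in the discussion before Definition \ref{def:space_conf}.
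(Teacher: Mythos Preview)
Your argument is correct, but it is organised rather differently from the paper's. The paper factors $\iota_{r,k}$ as a composition $\rho\circ S$, where $S\colon\Delta_r\to\Delta_S$ is a ``Segre'' map into the Tits building of $\Sym^kH^0(X,rL)$ (sending the apartment of a basis $\underline{v}$ into the apartment of the monomial basis $S(\underline{v})$), and $\rho\colon\Delta_S\to\Delta_{rk}$ is a general ``retraction'' of buildings associated to the inclusion $H^0(X,rkL)^\vee\hookrightarrow(\Sym^kH^0(X,rL))^\vee$ dual to multiplication; continuity of $S$ and of $\rho$ is established separately, and the well-definedness of $\rho$ on the image of $S$ is argued via the nontriviality of the action on $X$. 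Your approach bypasses $\Delta_S$ entirely: you observe that the torus $T$ already acts diagonally on $H^0(X,rkL)$ in a fixed basis $f_j$, so $\iota_{r,k}(A_T)$ lands in a \emph{single} target apartment $A_{T''}$, and the map is a fixed linear $\bar\Phi$ followed by sphere normalisation. Your non-vanishing check---using that the section ring is a domain to produce the multidegrees $k\mathbf 1_i$ and $(k-1)\mathbf 1_i+\mathbf 1_j$ whose differences span the trace-zero hyperplane---is more explicit than the paper's, which leaves the corresponding step (that a nontrivial $\lambda$ in $SL(V_r)$ acts nontrivially on $X$) essentially implicit. What you gain is a shorter, more elementary proof with an explicit injectivity statement; what the paper's factorisation buys is a separation of the purely representation-theoretic part (the Segre map) from the part depending on $X$ (the retraction), and a general retraction lemma that might be reused elsewhere.
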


We can assume without loss of generality that $r=1$; moreover, we fix $k$, so, to simplify the notation, we write $\iota$ for $\iota_{k,r}$, $\Delta$ for $\Delta_r$ and $\Delta_k$ for $\Delta_{rk}$. Let $\Delta_S$ be the Tits building of the vector space $\Sym^k H^0(X,L)$. 

In Section \ref{sec:segre}, we will define a Segre map $S\colon \Delta(\Q)\to \Delta_S(\Q)$, and prove that it is continuous. In Section \ref{sec:retraction}, we will define a retraction map $\rho\colon \Delta_S(\Q)\to \Delta_k(\Q)$, and prove that it is continuous. In Section \ref{sec:prov}, we will show that $\iota$ is actually the composition of $S$ and $\rho$, concluding the proof of Theorem \ref{thm:cont}.

\begin{subsection}{Segre morphism of building}\label{sec:segre}
For an algebraic group $G$, let $\Gamma(G)$ be the set of 1PS's of $G$. Let $V=H^0(X,L)$ and $V_S=\Sym^k H^0(X,L)$. We have a Segre map
\begin{displaymath}
\begin{array}{ccccc}
S\colon &\Gamma(SL(V)) &\to & \Gamma(SL(V_S)) \\
&\gamma & \mapsto & \gamma^{\otimes k}
\end{array}
\end{displaymath}

The Segre map defined on 1PS's induces a morphism of building; we describe directly this morphisms on weighted flags. Denote by $M$ be the collection of multi-indexes $I=(i_1,\dots , i_m)$ with $\sum i_j=k$. Let $\underline{v}$ be a basis of $V$ adapted to a weighted flag $(F,w)$ associated to $\gamma$. We denote by $S(\underline{v})$ the basis of $V_S$ formed by monomials in the element of $\underline{v}$; in particular, for $I\in M$ and $v\in \underline{v}$, we denote by $v^I$ the corresponding monomial in $S(\underline{v})$. Let $T(w)=\sum_{I\in M} w^I$. To each monomial $v^I$ we assign weight $w^I-T(w)$: this defines a weighted flag $(S(F),S(w))$. The weighed flag associated to $S(\gamma)$ is exactly $(S(F),S(w))$, so this gives a description of the map
$$
S\colon \Delta \to \Delta_S
$$
The Segre map preserves apartments in the following sense. Let $A$ be an apartment of $\D$ associated to a basis $\underline{v}$. Then, $S(A)$ is contained in the apartment $A_S$ associated to the basis $S(\underline{v})$.

Let us show that $S\colon A \to A_S$ is continuous for every apartment $A$. Co-ordinates of points in an apartment are given just by the weights. In particular, for $I\in M$, the $I$-th co-ordinate of $(S(F),S(w))$ is $w^I-T(w)$; benign the new co-ordinate a polynomial in the old one, the map $S$ is continuous. Since the simplicial topology on $\Delta$ is the direct limit of the topology on the apartments, we conclude that $S$ is continuous on $\Delta$.

\end{subsection}
\begin{subsection}{Retraction of buildings}\label{sec:retraction}
Let $i\colon W \hookrightarrow V$ be an inclusion of vector spaces. We can define the corresponding retraction of building 
$$
\rho \colon \Delta(V) \to \Delta(W)
$$
as follows. Let $(F,w)$ be a weighted flag in $\Delta(V)$. Choose a basis of $V$ adapted both to $F$ and $W$. This amounts to choose a representative $\gamma$ of $F$ which preserves globally $W$; we denote by $U$ the $\gamma$-invariant complement of $W$ in $V$. We now let $\rho((F,w))$ to be the normalised weighted flag associated to the 1PS $\gamma|_W$. Remark that the action of $\gamma$ on $W$ could be trivial; in this case, $\rho$ is not defined at $(F,w)$. 

\begin{remark}[The map $\rho$ as a retraction] 
By choosing a complement $U$ of $W$ in $V$, there is a natural inclusion of $\Gamma(SL(W))$ in $\Gamma(SL(V))$, which in turn gives an inclusion of building $i\colon \Delta(W)\to \Delta(V)$; the map $\rho$ is the right-inverse of this inclusion.
\end{remark}
We can give the following alternative description of $\rho$, which does not depend the choice of the representative $\gamma$. We let $\rho(F)$ to be the flag defined by $\rho(F)_i W=F_i V \cap W$; we assign weight $w_i$ to $\rho(F)_i$. This definition is ill-posed, and we need to refine it. The first pathology is that $\rho(F)_i W$ is not, in general, a proper sub-space of $W$, if this happens we skip this step of the flag and we relabel the indexes. If all the subspaces $\rho(F)_i W$ are not proper sub-spaces of $W$, then we do not define $\rho$ at $(F,w)$. We can also have repetitions; in symbols, for some $i$, we can have that $\rho(F)_i=\rho(F)_{i+1}$. When this happens, we skip the step $i+1$ of the flag, and we relabel the indexes. To have well-defined flag, we still have to normalise the weight. With this description, we can prove the following lemma.

\begin{lemma}
The retraction $\rho$ defined above is a continuous map for the simplicial topology.
\end{lemma}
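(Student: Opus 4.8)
The plan is to run the same reduction-to-apartments principle used in the continuity proof for the Segre map $S$, but to add one combinatorial observation that neutralises the fact that the intersection $F_jV\cap W$ can jump. Recall from the previous section that a map on $\Delta(V)$ is continuous for the simplicial topology if and only if its restriction to every apartment is continuous, and that each apartment $A$ is a \emph{finite} simplicial complex. On a finite simplicial complex the topology is glued from the closed simplices, so $\rho|_A$ is continuous if and only if its restriction to each closed simplex $\bar\sigma\subseteq A$ is continuous (a finite closed cover, each piece continuous and the pieces agreeing on overlaps). I would therefore reduce the whole statement to checking continuity of $\rho$ on a single closed simplex.

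The crucial point is that on a closed simplex the underlying flag is essentially frozen. Fix an apartment $A$ with adapted basis $\underline v=\{v_1,\dots,v_n\}$ and a closed simplex $\bar\sigma\subseteq A$ corresponding to a coordinate flag $F$ (each $F_jV$ spanned by a subset of $\underline v$); points of $\bar\sigma$ are the weighted flags $(F,w)$ with $w$ in the standard simplex, weights being allowed to collide on the faces of $\bar\sigma$, where $F$ coarsens. The observation is that the subspaces $G_j:=F_jV\cap W$ depend only on $F$ and $W$, \emph{not} on the weights $w$, hence are constant on all of $\bar\sigma$. Consequently the set of indices discarded in the definition of $\rho$ — those $j$ with $G_j$ not proper in $W$, or $G_j=G_{j-1}$ a repetition — is a fixed subset, independent of $w$. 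Writing $J$ for the surviving indices, $\rho$ sends $(F,w)$ to the weighted flag on $W$ with steps $\{G_j\}_{j\in J}$ and weights obtained from $(w_j)_{j\in J}$ by subtracting the constant that makes the trace on $W$ vanish.

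With this description $\rho|_{\bar\sigma}$ is manifestly continuous: it factors as the linear coordinate projection $w\mapsto(w_j)_{j\in J}$ followed by the affine normalisation, and it lands in the single closed simplex of $\Delta(W)$ attached to the flag $\{G_j\}_{j\in J}$, on which colliding weights collapse exactly the faces that $\bar\sigma$ already sees. The map is defined wherever the normalisation is non-zero, that is, away from the face on which all weights indexed by $J$ coincide; but that face is precisely where the induced action on $W$ is trivial and $\rho$ is declared undefined. Composing with the continuous inclusion of this closed simplex into $\Delta(W)$ yields continuity of $\rho|_{\bar\sigma}$, and letting $\bar\sigma$ and $A$ vary proves the lemma.

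The only genuinely delicate issue is the passage \emph{between} simplices, where $F_jV\cap W$ really does jump; the whole force of the argument is that the simplicial (direct-limit) topology asks only for continuity on each closed simplex separately, and inside one simplex the flag, hence each $G_j$, is frozen, so the apparent jumps are confined to the gluing along faces, where the two descriptions automatically match. I would stress explicitly that this is why the chosen basis $\underline v$ need not be adapted to $W$, even though the pointwise recipe for $\rho$ uses such an adapted basis.
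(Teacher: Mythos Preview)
Your argument is correct and follows the same overall strategy as the paper: reduce to a single apartment $A\subset\Delta(V)$ via the direct-limit description of the simplicial topology, and then observe that $\rho$ is locally a coordinate projection (followed by the affine trace-normalisation). The difference lies in how ``locally'' is organised. The paper covers $A$ by the sets $A\cap\rho^{-1}B$ as $B$ ranges over apartments of $\Delta(W)$, and asserts that on each such piece the weight of a target basis vector $u_i$ equals one of the source coordinates $w_j$. You instead use the finite closed cover of $A$ by its closed simplices $\bar\sigma$, on each of which the flag $F$ is frozen so that the intersections $G_j=F_jV\cap W$ and the surviving index set $J$ are literally constant; continuity on $\bar\sigma$ is then the continuity of a fixed linear projection plus an affine shift. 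Your decomposition has the advantage that the gluing step is the standard pasting lemma for a finite closed cover, and it makes transparent why the apparent ``jump'' of $F_jV\cap W$ between simplices is harmless (it only occurs across faces, where the two descriptions agree). The paper's version is terser and leaves the nature of the pieces $A\cap\rho^{-1}B$ and the gluing over uncountably many $B$ implicit. Both routes ultimately rest on the same computation; yours simply makes the bookkeeping explicit.
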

\begin{proof}
Since the simplicial topology on $\Delta$ is the direct limit of the topology on the apartments, it is enough to show that $\rho$ restricted to any apartment $A$ is continuous.

Let $A$ be an apartment in $\Delta(V)$ and $\underline{v}$ the corresponding basis. Co-ordinates on $A$ are given by the weights $w$. Let $B$ be an apartment in $\Delta(W)$, and $\underline{u}$ the corresponding basis. The map $\rho \colon U_A\cap \rho^{-1}B\to B$ is given just by the projection onto some of the co-ordinates, i.e. the weigh of $u_i$ in $(\rho(F),\rho(w))$ is just $w_j$ for an appropriate index $j$. This shows that $\rho$ restricted to $A\cap \rho^{-1}B$ is continuous. Since this holds for all apartments $B$ of $\Delta(W)$, we have proven that $\rho$ restricted $A$ is continuous.
\end{proof}

\begin{remark}[Pathologies] Let us stress that $\rho$ does not preserve many geometric features of $\Delta(V)$. To start with, $\rho$ is not open: indeed, already locally on an apartment $A$, we can see that $\rho$ is like a linear projection followed by a linear inclusion, and the latter is not open. This restriction does not preserve neither the simplicial structure nor the apartments. Moreover, $\rho$ does not preserve geodesics. To see this, one can take two flags $F$ and $G$ such that does not exists an apartment which contains $F$, $G$ and $W$, where we see $W$ as a one step flag, so a vertex of $\Delta(V)$. 

\end{remark}

\end{subsection}

\begin{subsection}{Proof of Theorem \ref{thm:cont}}\label{sec:prov}
Let us start off by looking at the Segre morphism
$$
S\colon \P H^0(X,L)^{\vee}\to \P \Sym^k H^0(X,L)^{\vee}
$$
A test configuration $\scX$ embedded $\P H^0(X,L)^{\vee}$ can be re-embedded in a $\G$-equivariant way in $ \P \Sym^k H^0(X,L)^{\vee}$ via $S$. The test configuration $(S(\scX),\O(1))$ is isomorphic to $(\scX,\scL^k)$; in particular, $(S(\scX),\O(1))$ is trivial if and only if $(\scX,\scL^k)$ is trivial. If $\lambda$ is a 1Ps of $SL(H^0(X,L)^{\vee})$ inducing $\scX$, then $S(\gamma)$ induces $(S(\scX),\scL^k)$.

Let now $[\gamma]$ be a point of $\Delta_S$, assume it acts non-trivially on $S(X)$, so that it induces a non-trivial test configuration (we mean non-trivial in the sense of Definition \ref{def:tc}). This test configuration has exponent $k$, because the restriction of $\O(1)$ to $X$ is $L^k$. We define $\rho([\gamma])$ to be the point of $\Delta_k(\Q)$ which represents the test configuration induced by $\gamma$ (we can think at $\Delta_k(\Q)$ as the moduli space of exponent $k$ test configurations, and $\rho$ as a classifying map).

The composition of $\rho$ and $S$ makes sense, because $S(\lambda)$ acts non-trivially on $X$, and it is equal to $\iota$ because of the above discussion. To prove Theorem \ref{thm:cont}, we have to show that the map $\rho$ defined above is the retraction of buildings introduced in Section \ref{sec:retraction}. 

Take $V=\Sym^k H^0(X,L)^{\vee}$ and $W=H^0(X,kL)^{\vee}$. There is a natural inclusion of $W$ in $V$ given by the co-multiplication, let $\rho$ be the associated retraction. The embedding of $X$ in $\P V$ factors trough the embedding of $X$ in $\P W$. Let $(F,w)$ be a weighted flag in $\Delta(V)$, and take a representative $\gamma$ which preserves $W$. Remark that $\rho$ is defined at $(F,w)$ if and only if the action of $\gamma$ on $W$ is not trivial; this is equivalent to ask that the action on $X$ is not trivial, hence that $\gamma$ induces a non-trivial test-configuration. 

The action of $\gamma$ on $X$ is equal to the action of $\gamma|_W$ on $X$, hence $\rho((F,w))$ represent the test configuration obtained by letting $\gamma$ acting on $X\subset \P V$, as requested. This concludes the proof of Theorem \ref{thm:cont}.

\end{subsection}
\end{section}

\begin{section}{Analogy with classical symmetric spaces}\label{sec:analogy}

In this section we work over the complex numbers. The Tits building $\Delta(V)$ of a vector space $V$ can be introduced as  boundary of the symmetric space $H:=SL(V)/SU(V)$, and this gives also an alternative point of view on apartments, see for example \cite{Borel_Li}. We briefly recall this theory, and suggest an analogy for our Tits building $\Delta_{\infty}$. 
 
The Killing metric on the Lie algebra of $SL(V)$ defines a constant scalar curvature metric with negative curvature on the homogeneous space $H$. Let $o$ be the image of the identity in $H$. A one parameter subgroup $\lambda$ of $SL(V)$ defines a map from $\mathbb{C}^*/\mathbb{S}^1\cong\mathbb{R}^+$ to $H$, the image is a geodesic starting at $o$. One can equivalently define $\Delta(V)$ as the set of all geodesics starting at $o$.  One parameter subgroups are also from this point of view rational points of $\Delta(V)$. It is then possible to define a topology on $\hH:=H \cup \Delta(V)$, which turns $\hH$ into a compact space.

The image of a $d$ dimensional torus of $SL(V)$ in $H$ is a flat subspace, which means that it is isometric to a $d$ dimensional Euclidean space. Maximal tori give maximal flat  subspaces of $H$. One can introduce the notion of rank of $H$ as the dimension of a maximal flat subspace of $H$, and this turns out to be equal to the rank of $SL(V)$. Let $T$ be a maximal torus and $\mathbb{E}_T$ its image in $H$. The boundary of $\mathbb{E}_T$, which can be defined by intersecting the closure of $\mathbb{E}_T$ in $\hH$ with $\Delta(V)$, is the apartment $A_T$ defined in Section \ref{sec:prel}. In a more colloquial language, we can say that apartments are boundaries of maximal flat subspaces.
 
In view of the Yau-Tian-Donaldson conjecture, it is natural to think at $\Delta_{\infty}$ as the boundary of the space $\mathcal{H}$ of K\"{a}hler metrics on $L$. This space has a natural Riemannian metric, as advocated in \cite{Donaldson_Symmetric}. Since $\mathcal{H}$ is infinite dimensional, standard results of Riemannian geometry does not go trough. We can look at $\mathcal{H}$ as a metric space rather than an infinite dimensional manifold, and in this set up $\mathcal{H}$, or rather its completion, is known to be a CAT(0) spaces, see for instance \cite[Theorem 4.11]{ele} and references therein. With the formalism of CAT(0) spaces one can proves many basic result such as the uniqueness of geodesics, see \cite{Bridson}.

This point of view suggests that the notion of apartment for $\Delta_{\infty}$ should be relate to maximally flat subspace in the space of K\"{a}hler potentials. In this set up also the notion of maximally flat subspace is troublesome, we might define them as spaces which are isometric to a Hilbert space. Again, this is discussed in \cite[Section 6]{Donaldson_Symmetric}.

In classical geometric invariant theory, one can define the normalized Hilbert-Mumford weight on $\Delta(V)$ as the slope of the Kempf-Ness functional on $H$, see for instance \cite[Section 5.1]{BHJ2}. The Kempf-Ness functional is convex and Lipschitz, so one can use results from the theory of CAT(0) spaces to prove the existence of maximally destabilising one parameter subgroups, see \cite{Leeb1} and \cite{Leeb2}. One could try a similar approach to study optimally destabilizing test configurations by replacing the Kempf-Ness functional with the Mabuchi or the Ding functional. However, none of these functionals seems to be Lipschitz, so we do not know how to generalize this approach.

\end{section}


\begin{thebibliography}{}

 \bibitem{BHJ}{S. Boucksom, T. Hisamoto, M. Jonsson,Uniform K-stability, Duistermaat-Heckman measures and singularities of pairs, 
Annales de l'Institut Fourier, 2017, 67 (2), pp.743 - 841}
  \bibitem{BHJ2}{S. Boucksom, T. Hisamoto, M. Jonsson,Uniform K-stability and asymptotics of energy functionals in K\"{a}hler geometry, to appear in JEMS }
\bibitem{prep}{S. Boucksom and M. Jonsson. Singular semipositive metrics on line bundles on varieties over trivially valued fields, Arxiv preprint}
\bibitem{Borel_Li}{A. Borel and J. Lie, Compactifications of Symmetric and Locally Symmetric Spaces, Birkhauser, 2006}
 \bibitem{me}{G. Codogni and R. Dervan, Non-reductive automorphism groups, the Loewy filtration and K-stability, Annales de l'institut Fourier Vol. 66 no. 5 (2016) pp. 1895-1921}
 
 
 \bibitem{Bridson}{M. Bridson and A. Haefliger, Metric spaces of non-positive curvature, Grundlehren der Mathematischen Wissenschaften, 319. Springer-Verlag, Berlin, 1999. xxii+643 pp}
 \bibitem{Ruadhai_Uniform}{ R. Dervan, Uniform stability of twisted constant scalar curvature K\"{a}hler metrics, Int. Math. Res. Notices (2016) Vol 15 pp. 4728-4783.}
 \bibitem{DervanRoss}{R. Dervan and J. Ross, K-stability for K\"{a}hler Manifolds, Math. Res. Lett. Vol. 24, No. 3 (2017)}
 \bibitem{Der2}{R. Dervan, Relative K-stability for K\"{a}hler manifolds, to appear in Math. Annalen}
 \bibitem{ele}{E. Di Nezza, V. Guedj,Geometry and Topology of the space of K\"{a}hler metrics on singular varieties, to appear in Compositio Mathematica}
 \bibitem{Don2}{S. K. Donaldson. Scalar curvature and stability of toric varieties. J. Differential Geom. 62 (2002)}
 \bibitem{Don}{S. K. Donaldson, Lower bounds of Calabi functionals, J. Differential Geom. Volume 70, Number 3 (2005), 453-472}
 \bibitem{Donaldson_Symmetric}{S. Donaldson, Symmetric spaces, K\"{a}hler geometry and Hamiltonian dynamics, Amer. Math. Soc. Transl. Ser. 2, vol. 196; Amer. Math. Soc., Providence RI, 1999, pp. 13-33.}
 \bibitem{MG}{F. Martin and W. Gubler, On Zhang's semipositive metrics, Arxiv preprint}
 \bibitem{Leeb2}{Misha Kapovich, Bernhard Leeb and John Millson, Convex functions on symmetric spaces, side lengths of polygons and the stability inequalities for weighted configurations at infinity,Journal of Differential Geometry 81 (2009), 297-354}
 \bibitem{Leeb1}{B. Kleiner and B. Leeb, Rigidity of quasi-isometries for symmetric spaces and Euclidean buildings, Comptes Rendus de l'Acad\'{e}mie des Sciences - Series I - Mathematics, Volume 324, Issue 6, 1997, Pages 639-643,}

\bibitem{GIT}{Mumford, D. and Fogarty, J. and Kirwan, F. , Geometric invariant theory,Ergebnisse der Mathematik und ihrer Grenzgebiete (2), Vol. 34. Springer-Verlag, Berlin, third edition, 1994.}
\bibitem{Kempf}{G. Kempf, Instability in invariant theory, Annals of Mathematics, Second Series, Vol. 108, No. 2 (Sep., 1978), pp. 299-316}
\bibitem{Laz}{R. Lazarsfeld, Positivity in Algebraic Geometry, Vol 1, Springer 2014}
\bibitem{RossThomas}{J. Ross and R. P. Thomas, A study of the Hilbert-Mumford criterion for the stability of projective varieties, Journal of Algebraic Geometry 16 (2007), 201-255.}
\bibitem{Odaka} {Odaka, Yuji, On parametrization, optimization and triviality of test configurations, Proc. Amer. Math. Soc. ,143, 2015}
\bibitem{Rousseau}{G. Rousseau, Immeubles spheriques et theorie des invariants, C. R. Acad. Sci. Paris Ser. A-B 286 (1978)}
\bibitem{Serre}{Serre, Jean-Pierre, Compl\`{e}te r\'{e}ductibilit\'{e}, S{\'e}minaire Bourbaki. Vol. 2003/2004, N. 299, 2005 }
\bibitem{Zak}{Z. Sj\"{o}str\"{o}m Dyrefelt, K-semistability of cscK manifolds with transcendental cohomology class,  To appear in J. Geom. Anal.}
\bibitem{Gabor}{G. Sz\'{e}kelyhidi, Filtrations and test-configurations, Math. Ann. (2015) 362: 451}
\bibitem{ICM}{G. Sz\'{e}kelyhidi, Extremal K\"{a}hler metrics, Proceedings of the ICM, 2014}
\bibitem{witt}{D. Witt-Nystrom,Test configurations and Okounkov bodies, Compositio Mathematica 148, 6 (2012), pp. 1736-1756.}
\end{thebibliography}
\end{document}